\documentclass[oneside,english]{amsart}
\usepackage[T1]{fontenc}
\usepackage[latin9]{inputenc}
\usepackage{textcomp}
\usepackage{amsthm}
\usepackage{amstext}
\usepackage{amssymb}
\usepackage{graphicx}
\usepackage{esint}
\usepackage[numbers]{natbib}

\makeatletter
\numberwithin{equation}{section}
\numberwithin{figure}{section}
\theoremstyle{plain}
\newtheorem{thm}{\protect\theoremname}
  \theoremstyle{definition}
  \newtheorem{defn}[thm]{\protect\definitionname}
  \theoremstyle{definition}
  \newtheorem{example}[thm]{\protect\examplename}
  \theoremstyle{plain}
  \newtheorem{prop}[thm]{\protect\propositionname}
  \theoremstyle{remark}
  \newtheorem{rem}[thm]{\protect\remarkname}
  \theoremstyle{plain}
  \newtheorem{lem}[thm]{\protect\lemmaname}
\newenvironment{lyxlist}[1]
{\begin{list}{}
{\settowidth{\labelwidth}{#1}
 \setlength{\leftmargin}{\labelwidth}
 \addtolength{\leftmargin}{\labelsep}
 }}
{\end{list}}

\makeatother

\usepackage{babel}
  \providecommand{\definitionname}{Definition}
  \providecommand{\examplename}{Example}
  \providecommand{\lemmaname}{Lemma}
  \providecommand{\propositionname}{Proposition}
  \providecommand{\remarkname}{Remark}
\providecommand{\theoremname}{Theorem}

\begin{document}

\title[Stability criteria for partitioning problems]{Stability Criteria for Multiphase Partitioning Problems with Volume
Constraints}

\author{N. D. Alikakos({*})}

\address{Department of Mathematics, University of Athens, Panepistemiopolis,
15784 Athens, Greece}

\email{nalikako@math.uoa.gr}

\author{A. C. Faliagas}

\address{Department of Mathematics, University of Athens, Panepistemiopolis,
15784 Athens, Greece}

\email{afaliaga@math.uoa.gr}

\dedicatory{Dedicated to the memory of Paul Fife.}

\thanks{({*}) The first author was partially supported through the project
PDEGE (Partial Differential Equations Motivated by Geometric Evolution),
co-financed by the European Union European Social Fund (ESF) and national
resources, in the framework of the program Aristeia of the Operational
Program Education and Lifelong Learning of the National Strategic
Reference Framework (NSRF)}
\begin{abstract}
We study the stability of partitions involving two or more phases
in convex domains under the assumption of at most two-phase contact,
thus excluding in particular triple junctions. We present a detailed
derivation of the second variation formula with particular attention
to the boundary terms, and then study the sign of the principal eigenvalue
of the Jacobi operator. We thus derive certain stability criteria,
and in particular we recapture the Sternberg-Zumbrun result on the
instability of the disconnected phases in the more general setting
of several phases.
\end{abstract}
\maketitle

\section{Introduction}

The partitioning  of a set into a number of subsets (the ``phases'')
so that the dividing hypersurface (the ``interface'') has minimal
area, is a problem of geometric analysis and calculus of variations.
It is of high importance in the physical sciences and engineering
because of its relation to surface tension. Examples include a variety
of phenomena\citep{key-1} ranging from the annealing of metals (Mullins\citep{key-2})
to the segregation of biological species (Ei et al\citep{key-3}).
Two phase systems formed by the mixing of two polymers or a polymer
and a salt in water are used for the separation of cells, membranes,
viruses, proteins, nucleic acids, and other biomolecules. The partitioning
between the two phases is dependent on the surface properties of the
materials. An overview of the physical aspects of the subject is offered
in \citep{key-4}. Early studies of the mathematical problem of partitioning
include Nitsche's paper\citep{key-5,key-6}, and Almgren's Memoir\citep{key-7}
(see also White\citep{key-23}.)

Paul Fife was one of the top applied mathematicians of his time with
significant and lasting contributions to Diffuse Waves, Diffuse Interfaces,
Stefan problems and Phase Field Models. His monographs\citep{key-111,key-112}
``Dynamics of Internal Layer and Diffuse Interfaces'' and ``Mathematical
Aspects of Reactions and Diffusive Systems'' and the IMA volume\citep{key-113}
``Dynamical Issues in Combustion Theory'' are classics. Fife with
his collaborators studied extensively the dynamical problems related
to the generation of partition and to their coarsening. For a sample
see \citep{key-8,key-9,key-101,key-102}.

Sternberg-Zumbrun\citep{key-15}, treated the static problem and proved
that disconnected two phase partitions of convex sets are always unstable.
The S-Z formulas with little notation changes are given in the next
theorem. For the reader's convenience some details are given to make
the exposition self-contained. Throughout this paper, we take $\Omega\subset\mathbb{R}^{N}$
to be a bounded domain with smooth boundary $\Sigma=\partial\Omega$. 
\begin{defn}
Let $M$ be a $n$-dimensional $C^{1}$ submanifold of $\mathbb{R}^{N}$
with boundary and $V$ an open subset of $\mathbb{R}^{N}$ such that
$V\cap M\neq\emptyset$. A variation of $M$ is a collection of diffeomorphisms
$(\xi^{t})_{t\in I}$, $I=]-\delta,\delta[$, $\delta>0$, $\xi^{t}:V\to V$
such that

(i) The function $\xi(x,t)=\xi^{t}(x)$ is $C^{2}$ 

(ii) $\xi^{0}=id_{V}$ 

(iii) $\xi^{t}|_{V\setminus K}=id_{V\setminus K}$ for some compact
set $K\subset V$ .
\end{defn}
In place of the $\xi^{t}$ we often consider their extensions by identity
to all of $\mathbb{R}^{N}$. With each variation we associate the
\emph{first} and \emph{second variation fields}
\[
w(x)=\xi_{t}(x,0),\quad a(x)=\xi_{tt}(x,0)
\]
known\citep{key-16} as \emph{velocity} and \emph{acceleration fields},
$\xi_{t}$, $\xi_{tt}$ denoting first and second partial derivative
in $t$.

As $M\subset\Omega$, $\partial M\subset\Sigma$, admissible variations
of $M$ should respect the rigidity of the boundary of $\Omega$.
In this connection S-Z suggested that admissible variations of $M$
be obtained by solving the ODE
\begin{equation}
\frac{d\xi}{dt}=w(\xi),\quad\xi(0)=x\label{eq:SZ-var}
\end{equation}
for any given first variation vector field $w$ and then setting $\xi^{t}(x)=\xi_{x}(t)$,
where $\xi_{x}$ is the solution of (\ref{eq:SZ-var}) for the initial
condition $\xi_{x}(0)=x$. The requirement for rigid container walls
is satisfied by selecting $w$ so that $w(p)\in T_{p}\Sigma$ for
all $p\in\Sigma$, $T_{p}X$ denoting as usual the tangent space of
$X$. In this paper we consider only \emph{normal variations}, i.e.
those satisfying $w(p)\in N_{p}(M)$ for all $p\in M$. $N_{p}$ is
the normal bundle of $M$ at $p$.

By a \emph{partitioning} of $\Omega$ we mean a collection $M=(M_{i})_{i=1}^{m}$
of $C^{2}$ hypersurfaces with boundary (which is again $C^{2}$),
which are non-intersecting and their boundaries lie in $\Sigma=\partial\Omega$.
Additionally, by a \emph{minimal partitioning} $M$ we mean a critical
point of the area functional $A$ under the assumed volume constraints,
i.e. 
\[
\delta A(M):=\left.\frac{d}{dt}A(M^{t})\right|_{t=0}=0
\]
for all variations preserving $\Sigma$ and the volume of the phases.
In this equation $M^{t}=\xi^{t}(M)$, $\xi^{t}$ being a variation,
and $A(M)$ is the area of $M$.
\begin{thm}[Sternberg-Zumbrun]
\label{thm:SZ}Let $M=(M_{i})_{i=1}^{r}$ be a minimal 2-phase partitioning
in $\Omega$. Then for any normal variation of $M$, which preserves
$\Sigma$ and the volume of the phases, i.e.
\[
\int_{M}f=0,
\]
the second variation of area of $M$ is given by
\[
\delta^{2}A(M)=\left.\frac{d^{2}}{dt^{2}}A(M^{t})\right|_{t=0}=\int_{M}\left(|\mathrm{grad}_{M}\, f\,|^{2}-|B_{M}|^{2}f^{2}\right)-\int_{M\cap\Sigma}\sigma f^{2}
\]
where $f$ is the projection of the first variation field $w$ on
the unit normal field $N$ of $M$, $|B_{M}|^{2}$ is the norm of
the second fundamental form $B_{M}$ associated with $M$ and $\sigma=II_{\Sigma}(N,N)$
is the scalar version of the second fundamental form of $\Sigma$.
\end{thm}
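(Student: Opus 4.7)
My plan is to carry out the second-variation computation in the classical parametric setting, with particular care for the boundary terms forced by the constraint that $\xi^t$ preserves $\Sigma$. Locally parametrizing each component $M_i$, I would write
\[
A(M_i^t) = \int_{M_i} \sqrt{\det g(x,t)}\, dH^n(x),
\]
where $g_{\alpha\beta}(x,t) = \langle\partial_\alpha\xi^t,\partial_\beta\xi^t\rangle$ is the pulled-back metric, and Taylor-expand in $t$. Differentiation of this integrand yields the standard identities $\partial_t\sqrt{\det g}|_{t=0}=\mathrm{div}_M w$ and, for the second derivative, a combination of $\mathrm{div}_M a$, $(\mathrm{div}_M w)^2$, and a quadratic expression in $\nabla_M w$ that telescopes via the Gauss relations; I would carry these out componentwise on each $M_i$.

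Before specializing to $w=fN$, I would extract the Euler--Lagrange consequences of minimality from $\delta A = \int_M \mathrm{div}_M w\, dH^n=0$. Applying the divergence theorem on $M$ and running test variations compatible with the single two-phase volume constraint forces the mean curvature $H$ to be constant on all of $M$ (the Lagrange multiplier). Running test variations supported near $\partial M$ and tangential to $\Sigma$ yields the natural boundary condition that the outward conormal $\nu_M$ of $\partial M$ in $M$ coincides (up to sign) with the outer normal $N_\Sigma$ of $\Sigma$; equivalently, $M$ meets $\Sigma$ orthogonally, so $N\in T_p\Sigma$ for every $p\in\partial M$. Both facts are essential for the subsequent simplifications.

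Substituting $w=fN$ into the second-order integrand, I would use $\mathrm{div}_M w=-Hf$ together with the Weingarten and Gauss identities to rewrite the quadratic $\nabla_M w$-terms as $|\mathrm{grad}_M f|^2-|B_M|^2 f^2$ plus pure divergence terms on $M$. Integrating those divergences by parts, the contribution $\int_M Hf\cdot(\cdot)$ drops out via $H=\mathrm{const}$ together with the volume constraint $\int_M f=0$, leaving the interior integrand $|\mathrm{grad}_M f|^2-|B_M|^2 f^2$ plus a boundary contribution $\int_{\partial M}\langle a,\nu_M\rangle$ coming from the $\mathrm{div}_M a$ term.

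The delicate step, and the main obstacle, is identifying this boundary contribution with $-\int_{\partial M}\sigma f^2$. Since $\xi^t$ preserves $\Sigma$, the curve $t\mapsto\xi^t(p)$ for $p\in\Sigma$ lies on $\Sigma$, and its initial velocity is $w(p)\in T_p\Sigma$; by the Gauss formula its acceleration therefore satisfies $\langle a(p),N_\Sigma\rangle=-II_\Sigma(w,w)$. Using the orthogonality $\nu_M=\pm N_\Sigma$ on $\partial M$ established in step 2 converts $\int_{\partial M}\langle a,\nu_M\rangle$ into $\mp\int_{\partial M}II_\Sigma(w,w)\, dH^{n-1}$; and since $w=fN$ with $N\in T_p\Sigma$ along $\partial M$, one has $II_\Sigma(w,w)=f^2\,II_\Sigma(N,N)=\sigma f^2$, yielding the claimed $-\int_{M\cap\Sigma}\sigma f^2$. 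The bookkeeping that must be done carefully is the sign reconciliation between the extrinsic acceleration along $\Sigma$ and the intrinsic integration by parts on $M$, and verifying that the conormal components cancel correctly between the various boundary contributions.
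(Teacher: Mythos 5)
Your overall route is the same as the paper's: expand $A(M^{t})$ under the ODE-generated flow (this reproduces Simon's second-variation formula, which the paper simply quotes as (\ref{eq:SIM-2nd-var})), extract constancy of $\kappa$ and the orthogonal contact $N\in T_{p}\Sigma$ from the first variation, and identify the boundary term through the acceleration $a=D_{w}w$ of the flow, which stays on $\Sigma$ and therefore has normal component $\pm II_{\Sigma}(w,w)=\pm f^{2}\sigma$ along $\partial M$; combined with $\nu=\mp N_{\Sigma}$ this is exactly how the paper converts $\int_{\partial M}a\cdot\nu$ into $-\int_{\partial M}\sigma f^{2}$. So the boundary analysis, which you correctly single out as the delicate point, is sound modulo the sign conventions you explicitly defer.

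The genuine gap is in the interior cancellation. After substituting $w=fN$, Simon's integrand contains, besides $|\mathrm{grad}_{M}f|^{2}-|B_{M}|^{2}f^{2}$, the terms $(\mathrm{div}_{M}w)^{2}=\kappa^{2}f^{2}$ and $\mathrm{div}_{M}a=\mathrm{div}_{M}a^{\top}-\kappa\,N\cdot D_{w}w$. You claim these ``drop out via $H=\mathrm{const}$ together with the volume constraint $\int_{M}f=0$''. They do not: $\kappa^{2}\int_{M}f^{2}$ is quadratic in $f$ and is not killed by $\int_{M}f=0$, and neither is $\int_{M}\kappa\,N\cdot D_{w}w$. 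What actually cancels them is second-order volume information: either one insists that the variation preserves $|\Omega_{j}|$ exactly, so that $\delta^{2}|\Omega_{j}|=\int_{M}f\,\mathrm{div}_{\mathbb{R}^{N}}w=0$ (the content of Lemma \ref{lem:var-vol}), or one works with the Lagrange functional $A^{\star}$ and the identities (\ref{eq:Lagr-mult}), as the paper does. In either case the pointwise identity $\mathrm{div}_{\mathbb{R}^{N}}w=\mathrm{div}_{M}w+N\cdot D_{N}w$ yields
\[
\kappa^{2}f^{2}-\kappa\,N\cdot D_{w}w+\kappa f\,\mathrm{div}_{\mathbb{R}^{N}}w=0,
\]
leaving only $\mathrm{div}_{M}a^{\top}$ to produce the boundary term. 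Without this step your computation retains the nonvanishing contribution $\kappa^{2}\int_{M}f^{2}$ (equal to $\kappa^{2}$ after normalization), so the asserted formula would be off by exactly that amount whenever $\kappa\neq0$. You need to add the second variation of the volume functional (or the explicit construction of exactly volume-preserving variations, as in Sternberg--Zumbrun) to close the argument.
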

The orientation of $M$ is selected so that $\sigma\geqslant0$ on
$\Sigma$ for convex $\Omega$.

Using this formula with $r=2$, S-Z proved that for convex $\Omega$,
when $\sigma\neq0$ on $\Sigma$, every disconnected two phase partitioning
is necessarily unstable. Recall that by definition a minimal partitioning
is \emph{stable} when $\delta^{2}A(M)>0$ for all variations $w\neq0$
preserving $\Sigma$ and the volume of the phases.

In the following section an extension of the S-Z formula to $m$-phase
problems is given in Proposition \ref{prop:gen-2nd-var}. The instability
of disconnected multiphase partitions follows as an application of
this. In Section \ref{sec:Spectr-Anal} we develop the spectral theory
of the bilinear form expressing the second variation of area, which
is the main tool for proving our stability/instability results. The
main statement in this section is Proposition \ref{prop:Categ-b}
which states that, for normalized variations, the minimum of the second
variation of area is given by the principal eigenvalue. The difficulties
in obtaining this result are (i) that the the boundary integral $\int_{\partial M}f^{2}$
cannot be bounded above by $\int_{M}f^{2}$, and (ii) that admissible
variations need not satisfy the boundary condition (\ref{eq:eigv-BC})
of the related eigenvalue problem. They were handled by developing
an interpolation estimate for the boundary integral in Lemma \ref{lem:interp-est}.
An extension of Proposition \ref{prop:Categ-b} to $m$-phase partitioning
problems is immediate. Proposition \ref{prop:reduc} gives a characterization
of all connected $m$-phase partitionings by reduction to the two
phase case.

The last two sections are devoted to applications. In Section \ref{sec:Apps-RN}
we prove the existence of unstable partitionings in $\mathbb{R}^{N}$
when $\Omega$ satisfies hypothesis (H) (see Section \ref{sec:Apps-RN})
and that spherical partitionings are stable when they do not make
contact with the boundary of $\Omega$. As a byproduct we also prove
that spherical partitionings in bounded sets are never absolute minimizers
of the area functional under volume constraint. Applications to 2-dimensional
problems have been included in Section \ref{sec:Apps-2D}. The main
results here are criteria for instability, Proposition \ref{prop:crit-1},
and stability, Proposition \ref{prop:crit-2}. Proposition \ref{prop:no-nec-and-suf-cond}
shows that sufficiently small partitions are stable. For related work
see \citep{key-11,key-12,key-13}.

\section{Multiphase Partitioning Problems}

A more general functional may be used for more than two phases:
\begin{equation}
A(M)=\sum_{i=1}^{r}\gamma_{i}A(M_{i}).\label{eq:3-phase-area}
\end{equation}
The coefficients $\gamma_{i}>0$ have the physical meaning of \emph{surface
energy density}. The summation in (\ref{eq:3-phase-area}) extends
over all interfaces constituting the partition problem. The collection
$M=(M_{i})_{i=1}^{r}$ will be considered oriented, its orientation
being determined by the orientations of the $M_{i}$. There are $2^{r}$
possible orientations for $M$. Most of the following formulas depend
on the orientation of $M$. Admissible variations for the functional
in (\ref{eq:3-phase-area}) are those preserving phase volume. They
can be directly obtained from general variations by rendering them
volume preserving (see \citep{key-15}). As this is a highly involved
process for multiphase systems, we use the method of Lagrange multipliers,
which, as it turns out, is more convenient. In this connection we
consider the following modified (weighted) functional:
\begin{equation}
A^{\star}(M)=A(M)-\sum_{j=1}^{m}\lambda_{j}\left(\sum_{k=1}^{P_{j}}\left|\Omega_{jk}\right|-V_{j}\right)\label{eq:modif-area-func}
\end{equation}
In this formula $\left|\cdot\right|$ denotes volume, $m$ is the
number of phases, $P_{j}$ is the number of distinct sets phase $j$
is split (indexed by $k$), $V_{j}$ is the volume of phase $j$ and
$\lambda_{j}$ is the Lagrange multiplier corresponding to the volume
constraint for the $j$-th phase. Since $\sum_{j=1}^{m}\sum_{k=1}^{P_{j}}\left|\Omega_{jk}\right|=\left|\Omega\right|$,
there are only $m-1$ linearly independent constraints and we could
have used only $m-1$ Lagrange multiplies. For convenience and as
the final results are identical, we use $m$ multipliers.
\begin{example}
In the case of a disconnected 3-phase partition the weighted area
functional is given by (see Fig. \ref{fig:Disc-3-ph-part})
\begin{figure}
\includegraphics[bb=0bp -1bp 574bp 348bp,scale=0.5]{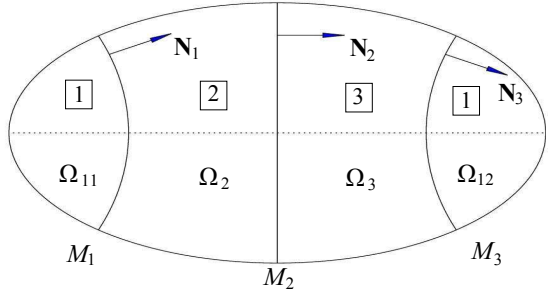}

\caption{\label{fig:Disc-3-ph-part}Disconnected 3-phase partitioning. Numbers
in boxes indicate phases. $M_{i}$ are the interfaces and $\mathbf{N}_{i}$
are corresponding normal fields.}
\end{figure}
\begin{equation}
A^{\star}(M)=\sum_{i=1}^{3}\gamma_{i}A(M_{i})-\lambda_{1}\left(\left|\Omega_{11}\right|+\left|\Omega_{12}\right|\right)-\lambda_{2}\left|\Omega_{2}\right|-\lambda_{3}\left|\Omega_{3}\right|\label{eq:example-func}
\end{equation}
The volume constants $V_{j}$ were dropped as they play no part in
the variational process.

The following proposition extends Theorem \ref{thm:SZ} to more than
two phases. \end{example}
\begin{prop}
\label{prop:gen-2nd-var}Let $M=(M_{i})_{i=1}^{r}$ be a minimal multiphase
partitioning with volume constancy constraint for the phases. Further
let $N_{i}$ be the unit normal field of $M_{i}$. Then

(i) The scalar mean curvature $\kappa_{i}=H_{i}\cdot N_{i}$ of each
interface $M_{i}$ is constant.

(ii) The scalar mean curvatures satisfy the relation
\begin{equation}
\sum_{j=1}^{r}\gamma_{j}\kappa_{j}=0\label{eq:mean-curv-id}
\end{equation}

(iii) Each $M_{i}$ is normal to $\Sigma$, i.e. on each $M_{i}\cap\Sigma$
we have $N_{i}\cdot N_{\Sigma}=0$ or $N_{i}(p)\in T_{p}\Sigma$ for
all $p\in\partial M_{i}$. \textup{$N_{\Sigma}$} is the normal field
of $\Sigma$.

(iv) For any admissible variation of $M$, i.e. one preserving $\Sigma$
and the volume of the phases, the second variation of area of $M$
is given by
\[
\delta^{2}A(M)=\sum_{i=1}^{r}\gamma_{i}\int_{M_{i}}\left(|\mathrm{grad}_{M_{i}}\, f_{i}\,|^{2}-|B_{M_{i}}|^{2}f_{i}^{2}\right)-\sum_{i=1}^{r}\gamma_{i}\int_{\partial M_{i}}\sigma_{i}\, f_{i}^{2}
\]
where $\sigma_{i}=II_{\Sigma}(N_{i},N_{i})$.\end{prop}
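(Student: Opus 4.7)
The plan is to adapt the Sternberg--Zumbrun two-phase argument to the multiphase setting by working with the modified functional $A^{\star}$ in (\ref{eq:modif-area-func}), using linearity over interfaces together with the standard vanishing of acceleration-field contributions at a critical point. I would first derive (i)--(iii) from $\delta A^{\star}(M)=0$, then compute $\delta^{2}A^{\star}(M)$ to obtain (iv).

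For the first variation, a normal variation of $M$ has velocity $w_{i}=f_{i}N_{i}$ on each non-intersecting interface $M_{i}$. The classical formula gives
\[
\gamma_{i}\,\delta A(M_{i})=-\gamma_{i}\int_{M_{i}}\kappa_{i}f_{i}+\gamma_{i}\int_{\partial M_{i}}w\cdot\nu_{i},
\]
where $\nu_{i}$ is the outward conormal to $\partial M_{i}$ in $M_{i}$. Each phase volume satisfies $\delta|\Omega_{jk}|=\sum_{i}\varepsilon_{i,jk}\int_{M_{i}}f_{i}$, where $\varepsilon_{i,jk}\in\{-1,0,+1\}$ records adjacency and orientation; the part of the boundary integral on $\Sigma$ vanishes because admissible variations preserve $\Sigma$. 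Setting $\delta A^{\star}(M)=0$ for arbitrary $f_{i}$ gives the bulk relations $\gamma_{i}\kappa_{i}=\lambda_{a(i)}-\lambda_{b(i)}$, with $a(i),b(i)$ labelling the two phase components adjacent to $M_{i}$. The right-hand side is constant, proving (i); summing these relations with the signs induced by the orientation proves (ii), since every $\lambda_{j}$ appears with net coefficient zero. Setting $\delta A^{\star}(M)=0$ against arbitrary tangential boundary data on $\partial M_{i}$ forces $\nu_{i}\in T\Sigma$, equivalently $N_{i}\in T_{p}\Sigma$ for $p\in\partial M_{i}$, which is (iii).

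For (iv), along any volume-preserving variation one has $|\Omega_{jk}|(t)\equiv V_{jk}$, so the Lagrange terms are constant and $\delta^{2}A(M)=\delta^{2}A^{\star}(M)$. Because $M$ is a critical point of $A^{\star}$, the acceleration-linear contribution $\delta A^{\star}(a)$ to $\delta^{2}A^{\star}$ vanishes, so $\delta^{2}A^{\star}(M)$ reduces to a quadratic form $Q(w,w)$ in the first-variation field alone. Since the interfaces are mutually disjoint and the variation is normal, $Q$ splits across interfaces as $Q(w,w)=\sum_{i}\gamma_{i}Q_{i}(f_{i},f_{i})$, where $Q_{i}$ is the single-interface Hessian at a critical point of $\gamma_{i}A(M_{i})$ minus the corresponding Lagrange pressure $\gamma_{i}\kappa_{i}$. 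Applying the Sternberg--Zumbrun computation of Theorem \ref{thm:SZ} to each $M_{i}$, which by (i) and (iii) has constant $\kappa_{i}$ and meets $\Sigma$ orthogonally along $\partial M_{i}$, identifies $Q_{i}(f_{i},f_{i})$ with $\int_{M_{i}}\bigl(|\mathrm{grad}_{M_{i}}f_{i}|^{2}-|B_{M_{i}}|^{2}f_{i}^{2}\bigr)-\int_{\partial M_{i}}\sigma_{i}f_{i}^{2}$, yielding the formula.

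The main technical obstacle is the derivation of the boundary term $-\gamma_{i}\int_{\partial M_{i}}\sigma_{i}f_{i}^{2}$. It exploits the Sternberg--Zumbrun ODE (\ref{eq:SZ-var}): because $\xi^{t}$ is the flow of a field $w$ tangent to $\Sigma$ on $\Sigma$, the normal-to-$\Sigma$ component of the acceleration of a trajectory starting at $p\in\partial M_{i}$ equals $II_{\Sigma}(w(p),w(p))$, and by (iii) with $w_{i}=f_{i}N_{i}$ this is $f_{i}^{2}\sigma_{i}$. Integrating this against the conormal over $\partial M_{i}$ and combining with the boundary contribution from the first-variation formula produces the stated $\sigma_{i}$-integral. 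Beyond this single-interface calculation, the multiphase extension reduces to Lagrange-multiplier bookkeeping, which becomes routine once (i)--(iii) are in hand.
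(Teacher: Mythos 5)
Your overall route is the paper's route: introduce the weighted functional $A^{\star}$ with Lagrange multipliers, read off (i)--(iii) from $\delta A^{\star}(M)=0$ via the first-variation formula and the divergence theorem, and then compute $\delta^{2}A^{\star}$ interface by interface, with the $\sigma_{i}$ boundary term coming from the acceleration of the Sternberg--Zumbrun flow. Parts (i)--(iii) are handled essentially as in the paper and are fine.

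There is, however, a genuine problem in your treatment of (iv): the claim that ``because $M$ is a critical point of $A^{\star}$, the acceleration-linear contribution $\delta A^{\star}(a)$ to $\delta^{2}A^{\star}$ vanishes'' is false, and it directly contradicts your own final paragraph. Criticality gives $\delta A^{\star}(v)=0$ only for \emph{admissible} fields $v$, i.e.\ those tangent to $\Sigma$ along $\partial M_{i}$; the acceleration $a=D_{w}w$ of the flow (\ref{eq:SZ-var}) is not such a field. Its component along $N_{\Sigma}$ at $p\in\partial M_{i}$ equals $II_{\Sigma}(w,w)=f_{i}^{2}\sigma_{i}$, and the surviving boundary piece of the linear-in-$a$ term, $\gamma_{i}\int_{M_{i}}\mathrm{div}_{M_{i}}a^{\top}=\gamma_{i}\int_{\partial M_{i}}a\cdot\nu_{i}=-\gamma_{i}\int_{\partial M_{i}}f_{i}^{2}II_{\Sigma}(N_{i},N_{i})$, is precisely the stated boundary term. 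If the vanishing claim were taken at face value, the $\sigma_{i}$ integral would disappear from the formula. So delete that sentence and let your last paragraph carry the argument. A second, smaller gap: equating $\delta^{2}A$ with $\delta^{2}A^{\star}$ does not by itself reduce the computation to single-interface Hessians; you still must compute the second variations of the volume terms $|\Omega_{jk}|$ (the paper's Lemma \ref{lem:var-vol}, giving $\delta^{2}|\Omega_{j}|=\int_{M_{j}}(\mathrm{div}_{\mathbb{R}^{N}}w)\,w\cdot N_{\partial\Omega_{j}}$) and verify that, after substituting the multiplier relations (\ref{eq:Lagr-mult}) and the identity $\mathrm{div}_{\mathbb{R}^{N}}X=\mathrm{div}_{M}X+N\cdot D_{N}X$, the bulk terms involving $\kappa_{i}N_{i}\cdot D_{w}w$, $\kappa_{i}^{2}f_{i}^{2}$ and $f_{i}\,\mathrm{div}_{\mathbb{R}^{N}}w$ cancel, leaving only $\int_{M_{i}}\mathrm{div}_{M_{i}}a^{\top}$. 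That cancellation is the actual content of the multiphase bookkeeping and should not be dismissed as routine without the volume lemma in hand.
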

\begin{proof}
For concreteness we consider the disconnected 3-phase partitioning
of Fig. \ref{fig:Disc-3-ph-part} with the indicated orientation.
Let $w$ be any variation of $M$. By (\ref{eq:example-func}), the
formula for the first variation of the area of a manifold \citep{key-16}
$\delta A(M_{i})=\int_{M_{i}}\mathrm{div}_{M_{i}}w$ and 
\[
\delta\left|\Omega_{jk}\right|=\int_{\partial\Omega_{jk}}w\cdot N_{\partial\Omega_{jk}},
\]
$N_{\partial\Omega_{jk}}$ being the unit \emph{outward} normal field
of $\partial\Omega_{jk}$, which is easily established, we obtain:
\[
\begin{alignedat}{1}\delta A^{\star}(M)= & \sum_{i=1}^{3}\gamma_{i}\int_{M_{i}}\mathrm{div}_{M_{i}}w-\lambda_{1}\left(\int_{M_{1}}w\cdot N_{1}-\int_{M_{3}}w\cdot N_{3}\right)\\
 & -\lambda_{2}\left(\int_{M_{2}}w\cdot N_{2}-\int_{M_{1}}w\cdot N_{1}\right)\\
 & -\lambda_{3}\left(\int_{M_{3}}w\cdot N_{3}-\int_{M_{2}}w\cdot N_{2}\right).
\end{alignedat}
\]
Let $H_{i}$ be the mean curvature vector field of $M_{i}$, $\nu_{i}$
the unit tangent field of $M_{i}$ which is normal to $\partial M_{i}$
(also known as ``conormal'' field) and $f_{i}=w\cdot N_{i}$ the
normal component of the variation field on $M_{i}$ (tangential components
are irrelevant and are disregarded from the outset). Let $(\cdot)^{\top}$
denote projection on the tangent space of a manifold. Application
of the identity \citep{key-16}
\begin{equation}
\mathrm{div}_{M}w=\mathrm{div}_{M}w^{\top}-H\cdot w^{\bot},\label{eq:div-formula}
\end{equation}
the divergence theorem for manifolds \citep{key-16}, $\int_{M_{i}}\mathrm{div}_{M}w^{\top}=\int_{\partial M_{i}}w\cdot\nu_{i}$,
and reordering of terms give:
\[
\begin{alignedat}{1}\delta A^{\star}(M)= & \sum_{i=1}^{3}\gamma_{i}\int_{\partial M_{i}}w\cdot\nu_{i}-\int_{M_{1}}(\gamma_{1}\kappa_{1}+\lambda_{1}-\lambda_{2})f_{1}\\
 & -\int_{M_{2}}(\gamma_{2}\kappa_{2}+\lambda_{2}-\lambda_{3})f_{2}\\
 & -\int_{M_{3}}(\gamma_{3}\kappa_{3}+\lambda_{3}-\lambda_{1})f_{3}.
\end{alignedat}
\]
Standard arguments render 
\begin{equation}
\gamma_{1}\kappa_{1}=\lambda_{2}-\lambda_{1},\quad\gamma_{2}\kappa_{2}=\lambda_{3}-\lambda_{2},\quad\gamma_{3}\kappa_{3}=\lambda_{1}-\lambda_{3}\label{eq:Lagr-mult}
\end{equation}
which prove (i) and (ii) and $w\cdot\nu_{i}=0$, which proves (iii).
By variation of the Lagrange multipliers we recover the volume constancy
constraints:
\[
\int_{M_{1}}f_{1}=\int_{M_{2}}f_{2}=\int_{M_{3}}f_{3}
\]

For the proof of (iv) we start by Simon's general formula for the
second variation of area \citep{key-16} 
\begin{equation}
\begin{alignedat}{1}\delta^{2}A(M_{i}) & =\int_{M_{i}}\left[\mathrm{div}_{M_{i}}a+(\mathrm{div}_{M_{i}}w)^{2}+g^{rs}\,\left\langle (D_{E_{r}}w)^{\bot},(D_{E_{s}}w)^{\bot}\right\rangle \right.\\
 & \left.-g^{rk}g^{sl}\left\langle D_{E_{r}}w,E_{s}\right\rangle \left\langle D_{E_{l}}w,E_{k}\right\rangle \right]
\end{alignedat}
\label{eq:SIM-2nd-var}
\end{equation}
In this formula $w$, $a$ are the first and second variation fields,
$(\cdot)^{\top}$ denotes projection on the tangent space of $M_{i}$,
$(\cdot)^{\bot}$ denotes projection on the normal space of $M_{i}$,
$E_{1},\cdots,E_{N-1}$ are the basis vector fields in a chart, $g_{rs}=E_{r}\cdot E_{s}$
are the covariant components of the metric tensor of $M_{i}$ and
$g^{kl}$ its contravariant components. Summation convention applies
throughout this paper. The notation $\left\langle \cdot,\cdot\right\rangle $
is alternatively used to denote scalar product in lengthier expressions.
Recalling that $w$ is normal to $M_{i}$, i.e. $w^{\bot}=w=f_{i}N_{i}$,
we have
\[
g^{rs}\left\langle (D_{E_{r}}w)^{\bot},(D_{E_{s}}w)^{\bot}\right\rangle =g^{rs}(D_{E_{r}}f_{i})(D_{E_{s}}f_{i})=|\mathrm{grad}_{M_{i}}\, f_{i}\,|^{2}
\]
and
\begin{alignat*}{1}
g^{rk}g^{sl}\left\langle D_{E_{r}}w,E_{s}\right\rangle \left\langle D_{E_{l}}w,E_{k}\right\rangle  & =g^{rk}g^{sl}\left\langle f_{i}D_{E_{r}}N_{i},E_{s}\right\rangle \left\langle f_{i}D_{E_{l}}N_{i},E_{k}\right\rangle \\
 & =f_{i}^{2}g^{ik}g^{jl}\left\langle D_{E_{r}}N_{i},E_{s}\right\rangle \left\langle D_{E_{l}}N_{i},E_{k}\right\rangle \\
 & =f_{i}^{2}g^{ik}g^{jl}\left\langle N_{i},D_{E_{r}}E_{s}\right\rangle \left\langle N_{i},D_{E_{l}}E_{k})\right\rangle \\
 & =f_{i}^{2}B_{\, r}^{k}B_{\, k}^{r}=f_{i}^{2}|B_{M_{i}}|^{2}
\end{alignat*}
In this equation $B_{rk}=\left\langle N,D_{E_{r}}E_{k}\right\rangle $
are the covariant components of the second fundamental form tensor
$II(u,v)=\left\langle N,D_{u}v\right\rangle $ ($u$, $v$ are tangent
vector fields) and $B_{\, r}^{s}=g^{sk}B_{rk}$. By (\ref{eq:div-formula})
as $w^{\top}=0$ we obtain
\begin{equation}
\mathrm{div}_{M_{i}}w=-\kappa_{i}\, f_{i}.\label{eq:divw}
\end{equation}
Combination of (\ref{eq:div-formula}) with the equality $a=D_{w}w$,
which is obtained by taking the time derivative of (\ref{eq:SZ-var}),
gives for $\mathrm{div}_{M_{i}}a$:
\[
\mathrm{div}_{M_{i}}a=\mathrm{div}_{M_{i}}a^{\top}-\kappa_{i}N_{i}\cdot D_{w}w
\]
By Lemma \ref{lem:var-vol} we have for the second variation of the
$|\Omega_{jk}|$:
\[
\delta^{2}|\Omega_{11}|=\int_{M_{1}}f_{1}\mathrm{div}_{\mathbb{R}^{N}}w,\quad\delta^{2}|\Omega_{12}|=-\int_{M_{3}}f_{3}\mathrm{div}_{\mathbb{R}^{N}}w
\]
\[
\delta^{2}|\Omega_{2}|=\int_{M_{2}}f_{2}\mathrm{div}_{\mathbb{R}^{N}}w-\int_{M_{1}}f_{1}\mathrm{div}_{\mathbb{R}^{N}}w
\]
\[
\delta^{2}|\Omega_{3}|=\int_{M_{3}}f_{3}\mathrm{div}_{\mathbb{R}^{N}}w-\int_{M_{2}}f_{2}\mathrm{div}_{\mathbb{R}^{N}}w
\]
By (\ref{eq:example-func})
\[
\delta^{2}A^{\star}(M)=\sum_{i=1}^{3}\gamma_{i}\delta^{2}A(M_{i})-\lambda_{1}(\delta^{2}|\Omega_{11}|+\delta^{2}|\Omega_{12}|)-\lambda_{2}\delta^{2}|\Omega_{2}|-\lambda_{3}\delta^{2}|\Omega_{3}|
\]
Replacing for $\delta^{2}A(M_{i})$, $\delta^{2}|\Omega_{jk}|$ by
the above equalities and rearranging give the following expression
for $\delta^{2}A^{\star}(M)$:
\[
\begin{alignedat}{1}\delta^{2}A^{\star}(M)= & \sum_{i=1}^{3}\gamma_{i}\int_{M_{i}}(|\mathrm{grad}_{M_{i}}\, f_{i}\,|^{2}-|B_{M_{i}}|^{2}f_{i}^{2})\\
 & +\int_{M_{1}}\left[\gamma_{1}\mathrm{div}_{M_{1}}a^{\top}-\gamma_{1}\kappa_{1}N_{1}\cdot D_{w}w+\gamma_{1}\kappa_{1}^{2}f_{1}^{2}-(\lambda_{1}-\lambda_{2})f_{1}\mathrm{div}_{\mathbb{R}^{N}}w\right]\\
 & +\int_{M_{2}}\left[\gamma_{2}\mathrm{div}_{M_{2}}a^{\top}-\gamma_{2}\kappa_{2}N_{2}\cdot D_{w}w+\gamma_{2}\kappa_{2}^{2}f_{2}^{2}-(\lambda_{2}-\lambda_{3})f_{2}\mathrm{div}_{\mathbb{R}^{N}}w\right]\\
 & +\int_{M_{3}}\left[\gamma_{3}\mathrm{div}_{M_{3}}a^{\top}-\gamma_{3}\kappa_{3}N_{3}\cdot D_{w}w+\gamma_{3}\kappa_{3}^{2}f_{3}^{2}-(\lambda_{3}-\lambda_{1})f_{3}\mathrm{div}_{\mathbb{R}^{N}}w\right]
\end{alignedat}
\]
By (\ref{eq:Lagr-mult}) the integral on the second row assumes the
form
\[
\begin{gathered}\int_{M_{1}}\left[\gamma_{1}\mathrm{div}_{M_{1}}a^{\top}-\gamma_{1}\kappa_{1}N_{1}\cdot D_{w}w+\gamma_{1}\kappa_{1}^{2}f_{1}^{2}+\gamma_{1}\kappa_{1}f_{1}\mathrm{div}_{\mathbb{R}^{N}}w\right]=\\
\int_{M_{1}}\left[\gamma_{1}\mathrm{div}_{M_{1}}a^{\top}-\gamma_{1}\kappa_{1}f_{1}(N_{1}\cdot D_{N_{1}}w-\mathrm{div}_{\mathbb{R}^{N}}w)+\gamma_{1}\kappa_{1}^{2}f_{1}^{2}\right]=\\
\int_{M_{1}}\left[\gamma_{1}\mathrm{div}_{M_{1}}a^{\top}+\gamma_{1}\kappa_{1}f_{1}\mathrm{div}_{M_{1}}w+\gamma_{1}\kappa_{1}^{2}f_{1}^{2}\right]=\\
\int_{M_{1}}\left[\gamma_{1}\mathrm{div}_{M_{1}}a^{\top}-\gamma_{1}\kappa_{1}^{2}f_{1}^{2}+\gamma_{1}\kappa_{1}^{2}f_{1}^{2}\right]=\int_{M_{1}}\gamma_{1}\mathrm{div}_{M_{1}}a^{\top}
\end{gathered}
\]
On the second and third equalities we have used the identity \citep{key-16}
\[
\mathrm{div}_{\mathbb{R}^{n}}X=\mathrm{div}_{M}X+N\cdot D_{N}X,
\]
and Equation (\ref{eq:divw}); $X$ is any differentiable field on
$M$. Application of the divergence theorem gives
\[
\begin{aligned}\int_{M_{1}}\gamma_{1}\mathrm{div}_{M_{1}}a^{\top}=\gamma_{1}\int_{\partial M_{1}}a\cdot\nu=\gamma_{1}\int_{\partial M_{1}}\nu\cdot D_{w}w=\\
\gamma_{1}\int_{\partial M_{1}}f_{1}^{2}\nu\cdot D_{N_{1}}N_{1}=-\gamma_{1}\int_{\partial M_{1}}f_{1}^{2}II_{\Sigma}(N_{1},N_{1})
\end{aligned}
\]
In a similar fashion we reformulate the last two rows in the expression
for $\delta^{2}A^{\star}(M)$ and this completes the proof of (iv).\end{proof}
\begin{rem}
The convention for the second fundamental form of $\Sigma=\partial\Omega$
is $N_{\Sigma}=-\nu$ so that $II_{\Sigma}(N_{i},N_{i})$ is always
non-negative for convex $\Omega$. $N_{i}$ is the normal field of
$M_{i}$, which is tangent to $\Sigma$.
\end{rem}

\begin{rem}
The equation of (ii) of the proposition depends on orientation choice.
In the case of a disconnected 2-phase partition we have $\gamma_{1}=\gamma_{2}=\gamma_{12}$,
the interfacial energy density of phases 1 and 2, and (ii) reduces
to $\kappa_{1}+\kappa_{2}=0$, which, in the 2-dimensional case, implies
the interfaces are circular arcs of \emph{equal} radii. This restricts
considerably the number of possible realizations of minimal disconnected
multiphase partitions.\end{rem}
\begin{example}
Let $\Omega$ be an ellipse centered at 0 with major and minor semiaxes
$a$, $b$. For $a>x_{0}>0$ the tangents at $(x_{0},\pm y_{0})$
have equations
\[
\frac{x_{0}}{a^{2}}(x-x_{0})+\frac{y_{0}}{b^{2}}(y-y_{0})=0,\quad\frac{x_{0}}{a^{2}}(x-x_{0})-\frac{y_{0}}{b^{2}}(y+y_{0})=0.
\]
These lines intersect at $x=x_{0}+\frac{a^{2}y_{0}^{2}}{b^{2}x_{0}}=\frac{a^{2}}{x_{0}}$.
Hence the radius of the circular arc, which intersects the ellipse
at right angles, is given by
\[
R^{2}=\left(\frac{a^{2}}{x_{0}}-x_{0}\right)^{2}+y_{0}^{2}=\left(\frac{a^{4}}{b^{4}}\frac{y_{0}^{2}}{x_{0}^{2}}+1\right)y_{0}^{2}
\]
This is a monotone decreasing function of $x_{0}$ and from this it
follows, with simple geometric arguments, that all possible minimal,
disconnected, 2-phase partitionings of an ellipse, are pairs of transversal
circular arcs symmetric about the $x$ or $y$-axis. \hfill{}$\square$
\end{example}
As an application of Proposition \ref{prop:gen-2nd-var} we prove
the instability of disconnected 3-phase partitions in a convex set.
Given any such partition, we choose a variation which is constant
on each interface. The volume constraints are satisfied if we choose
\[
f_{i}=\frac{1}{A(M_{i})},\; i=1,2,3.
\]
By (iv) of Proposition \ref{prop:gen-2nd-var} we obtain
\[
\delta^{2}A(M)=-\sum_{i=1}^{3}\frac{\gamma_{i}}{A(M_{i})^{2}}\int_{M_{i}}|B_{M_{i}}|^{2}-\sum_{i=1}^{3}\frac{\gamma_{i}}{A(M_{i})^{2}}\int_{\partial M_{i}}\sigma_{i}
\]
which is negative when $\sigma_{i}\neq0$. Generalization to an arbitrary
number of phases and phase splittings is immediate: 
\begin{prop}
\label{prop:SZ-gen}Let $\Omega\subset\mathbb{R}^{N}$ be an open,
bounded and convex set with $C^{2,\alpha}$ boundary. Let $M=(M_{i})_{i=1}^{r}$
be any stable $m$-phase partitioning of $\Omega$. Further, assume
that $\Omega$ is strictly convex, in particular $II_{\partial\Omega}(N_{i},N_{i})>0$
at all points of $\partial M_{i}\cap\Sigma$, $i=1,\cdots,r$. Then
$M$ is necessarily connected.
\end{prop}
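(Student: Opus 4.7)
The plan is to generalise the three-phase instability argument preceding the statement: assuming $M$ is disconnected, I will construct a piecewise-constant normal variation $f_i\equiv c_i\in\mathbb{R}$ on each interface $M_i$ whose second variation is strictly negative, contradicting stability. With this ansatz the linearised volume constraints furnish $m$ linear equations in the $r$ unknowns $c_i$, only $m-1$ of which are independent (their sum is the tautology $\sum_{j,k}|\Omega_{jk}|=|\Omega|$). A nontrivial solution therefore exists as soon as $r\geq m$.

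That inequality I would derive combinatorially. Form the graph $G$ whose vertices are the components $\Omega_{jk}$ ($j=1,\dots,m$, $k=1,\dots,P_j$) and whose edges are the interfaces $M_i$, each $M_i$ adjacent to the two components it separates. Connectedness of $\Omega$ forces connectedness of $G$: otherwise the vertex set of $G$ splits into two classes with no edge between them, and $\Omega$ decomposes as the disjoint union of the closures (taken in $\Omega$) of the two corresponding unions of components, two nonempty closed proper subsets. Hence $r\geq V-1$ with $V=\sum_j P_j$; disconnectedness forces some $P_j>1$, whence $V>m$ and $r\geq m$.

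Pick any nontrivial $(c_1,\dots,c_r)$ satisfying the volume constraints and apply Proposition \ref{prop:gen-2nd-var}(iv). Since $\mathrm{grad}_{M_i}f_i=0$ the gradient term drops, leaving
$$\delta^2 A(M)=-\sum_{i=1}^r\gamma_i c_i^2\left(\int_{M_i}|B_{M_i}|^2+\int_{\partial M_i}\sigma_i\right)\leq 0.$$
For strict negativity, fix $i_0$ with $c_{i_0}\neq 0$. If $\partial M_{i_0}\neq\emptyset$, strict convexity of $\Omega$ gives $\sigma_{i_0}>0$ on $\partial M_{i_0}$, so the boundary integral is strictly positive. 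Otherwise $M_{i_0}$ is a closed $C^2$ hypersurface properly contained in $\Omega$, and at any point of maximal distance from an interior basepoint of the enclosed region the second fundamental form is positive definite, so $\int_{M_{i_0}}|B_{M_{i_0}}|^2>0$. Either way $\delta^2 A(M)<0$.

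The one step that is not mechanical is the topological lemma asserting connectedness of $G$; it is folklore, but I would isolate it as a small independent statement, since what is actually used is not merely connectedness of $\Omega$ but also that each interface is the common boundary of exactly two components of the partition.
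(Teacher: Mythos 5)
Your proof is correct, and its core mechanism --- a piecewise-constant normal variation $f_i\equiv c_i$ whose second variation is strictly negative because of the boundary term $-\int_{\partial M_i}\sigma_i f_i^2$ --- is exactly the paper's. Where you differ is in how the admissible constant variation is produced. The paper treats the cyclic three-phase configuration of Fig.\ \ref{fig:Disc-3-ph-part} explicitly: there the volume constraints reduce to $\int_{M_1}f_1=\int_{M_2}f_2=\int_{M_3}f_3$, the choice $f_i=1/A(M_i)$ satisfies them, and the general case is declared ``immediate.'' Your dimension count ($r$ unknowns, at most $m-1$ independent linearized volume constraints, and $r\geq m$ via connectedness of the adjacency graph $G$) is what actually substantiates that claim: the explicit choice $f_i=1/A(M_i)$ forces each phase to have equally many inward- and outward-oriented adjacent interfaces, which holds for the cyclic example but fails already in the disconnected two-phase case, where one must instead take $c_1A(M_1)+c_2A(M_2)=0$ with opposite signs. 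Your handling of closed interfaces ($\partial M_{i_0}=\emptyset$, where the $\sigma$ term is absent and one needs $\int_{M_{i_0}}|B_{M_{i_0}}|^2>0$ for a compact boundaryless hypersurface) also covers a case that the paper's phrase ``which is negative when $\sigma_i\neq0$'' silently passes over. In short: same test variation and same sign argument, but you supply the two small lemmas --- connectivity of $G$ and the rank count on the constraints --- on which the paper's one-line generalization implicitly rests; isolating the graph lemma, as you propose, is the right call since it is precisely where the standing hypothesis of two-phase contact (no triple junctions) enters.
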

We close this section by proving the Lemma that was used in the proof
of part (iv) of Proposition \ref{prop:gen-2nd-var}.
\begin{lem}
\label{lem:var-vol}In the setting of Proposition \ref{prop:gen-2nd-var},
the second variation of volume of any distinct phase $\Omega_{j}$
is given by
\begin{equation}
\delta^{2}|\Omega_{j}|=\int_{M_{j}}(\mathrm{div}_{\mathbb{R}^{n}}w)w\cdot N_{\partial\Omega_{j}}\label{eq:A-19}
\end{equation}
In this equation $N_{\partial\Omega_{j}}$ is the unit outward normal
field of $\partial\Omega_{j}$ and $M_{j}$ denotes collectivelly
the interfacial part of $\partial\Omega_{j}$, i.e. $M_{j}=\partial\Omega_{j}\backslash\partial\Omega$.\end{lem}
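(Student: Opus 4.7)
The plan is to compute $\delta^{2}|\Omega_{j}|$ by pulling the volume back to the reference configuration via the diffeomorphism $\xi^{t}$, differentiating the resulting integrand twice in $t$, and then applying the divergence theorem to convert the bulk integral into a boundary integral on $\partial\Omega_{j}$.

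First I would write
\[
|\Omega_{j}^{t}|=\int_{\Omega_{j}}\det(D\xi^{t})\,dx
\]
and apply Jacobi's formula $\frac{d}{dt}\det(D\xi^{t})=\det(D\xi^{t})\,\mathrm{tr}\!\left((D\xi^{t})^{-1}\,D\dot{\xi}^{t}\right)$. Differentiating a second time and evaluating at $t=0$ (using $D\xi^{0}=I$, $\dot{\xi}^{0}=w$, $\ddot{\xi}^{0}=a$, and $\frac{d}{dt}(D\xi^{t})^{-1}|_{t=0}=-Dw$) yields
\[
\delta^{2}|\Omega_{j}|=\int_{\Omega_{j}}\!\left[(\mathrm{div}_{\mathbb{R}^{N}}w)^{2}-\mathrm{tr}\!\big((Dw)^{2}\big)+\mathrm{div}_{\mathbb{R}^{N}}a\right]dx.
\]

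Next I would invoke the pointwise identity
\[
(\mathrm{div}_{\mathbb{R}^{N}}w)^{2}-\mathrm{tr}\!\big((Dw)^{2}\big)=\mathrm{div}_{\mathbb{R}^{N}}\!\big(w\,\mathrm{div}_{\mathbb{R}^{N}}w-(Dw)w\big),
\]
which is verified by a direct coordinate computation (the two $w\cdot\nabla(\mathrm{div}\,w)$ terms cancel). The divergence theorem then converts the bulk integral into
\[
\int_{\partial\Omega_{j}}\!(\mathrm{div}_{\mathbb{R}^{N}}w)\,w\cdot N_{\partial\Omega_{j}}\,dS-\int_{\partial\Omega_{j}}\!(Dw)w\cdot N_{\partial\Omega_{j}}\,dS.
\]
The key observation that makes everything collapse is that the defining ODE \eqref{eq:SZ-var} forces $a=\ddot{\xi}^{0}=D_{w}w=(Dw)w$; hence $\int_{\Omega_{j}}\mathrm{div}_{\mathbb{R}^{N}}a=\int_{\partial\Omega_{j}}(Dw)w\cdot N_{\partial\Omega_{j}}\,dS$, and this cancels precisely the second boundary integral above, leaving
\[
\delta^{2}|\Omega_{j}|=\int_{\partial\Omega_{j}}(\mathrm{div}_{\mathbb{R}^{N}}w)\,w\cdot N_{\partial\Omega_{j}}\,dS.
\]

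Finally, since $w$ is tangential to $\Sigma=\partial\Omega$ (the admissibility condition that variations preserve the container), $w\cdot N_{\partial\Omega_{j}}=0$ on $\partial\Omega_{j}\cap\Sigma$, so the integral reduces to the interfacial part $M_{j}=\partial\Omega_{j}\setminus\partial\Omega$, giving \eqref{eq:A-19}. The main subtlety, and the only place one must be careful, is recognizing that the term $\int\mathrm{div}\,a$ in the second variation is \emph{not} a free parameter but is rigidly tied to $w$ via $a=D_{w}w$; this is exactly what allows the cancellation of $(Dw)w$ boundary terms and produces the clean formula in the statement.
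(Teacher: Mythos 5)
Your proposal is correct and follows essentially the same route as the paper: pull back the volume to the reference configuration, differentiate the Jacobian twice (the paper's ``rule of differentiation of determinants'' is your Jacobi's formula), rewrite $(\mathrm{div}\,w)^{2}-\mathrm{tr}\bigl((Dw)^{2}\bigr)$ as $\mathrm{div}_{\mathbb{R}^{N}}\bigl((\mathrm{div}_{\mathbb{R}^{N}}w)w-D_{w}w\bigr)$, and use $a=D_{w}w$ together with Gauss's theorem so that the acceleration term cancels the $(Dw)w$ boundary contribution, with the integral over $\partial\Omega_{j}\cap\Sigma$ dropping by tangency of $w$ to $\Sigma$. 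No substantive differences to report.
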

\begin{proof}
Let $(\xi^{t})_{t\in I}$ be a variation of $\mathbb{R}^{N}$ and
$\Omega_{j}^{t}=\xi^{t}(\Omega_{j})$. Then
\[
|\Omega_{j}^{t}|=\int_{\xi^{t}(\Omega_{j})}dx=\int_{\Omega_{j}}J\xi^{t}(y)dy
\]
where $J\xi^{t}$ is the Jacobian of $\xi^{t}$. For the second variation
of this functional we have 
\[
\delta^{2}|\Omega|=\left.\frac{d^{2}}{dt^{2}}|\Omega^{t}|\right|_{t=0}=\int_{\Omega}\left.\frac{\partial^{2}}{\partial t^{2}}J\xi^{t}(y)\right|_{t=0}dy.
\]
Application of the rule of differentiation of determinants and straight-forward
manipulations give
\[
\left.\frac{\partial^{2}}{\partial t^{2}}J\xi^{t}(y)\right|_{t=0}=\mathrm{div}_{\mathbb{R}^{N}}a+\frac{\partial w^{\alpha}}{\partial x^{\alpha}}\frac{\partial w^{\beta}}{\partial x^{\beta}}-\frac{\partial w^{\alpha}}{\partial x^{\beta}}\frac{\partial w^{\beta}}{\partial x^{\alpha}}.
\]
We are using greek indices for vector components and coordinates in
the surrounding space $\mathbb{R}^{N}$, and latin for submanifolds.
Summation convention is applicable to greek indices as well. Formula
(\ref{eq:A-19}) follows from this equality, the identity
\begin{alignat*}{1}
\frac{\partial w^{\alpha}}{\partial x^{\alpha}}\frac{\partial w^{\beta}}{\partial x^{\beta}}-\frac{\partial w^{\alpha}}{\partial x^{\beta}}\frac{\partial w^{\beta}}{\partial x^{\alpha}} & =\frac{\partial}{\partial x^{\alpha}}\left(w^{\alpha}\frac{\partial w^{\beta}}{\partial x^{\beta}}-w^{\beta}\frac{\partial w^{\alpha}}{\partial x^{\beta}}\right)\\
 & =\mathrm{div}_{\mathbb{R}^{n}}\left((\mathrm{div}_{\mathbb{R}^{n}}w)w-D_{w}w\right),
\end{alignat*}
Gauss's theorem in $\mathbb{R}^{N}$ and $a=D_{w}w$. Since the variation
preserves $\partial\Omega$, the integral over $\partial\Omega_{j}\backslash M_{j}$
drops. 
\end{proof}

\section{\label{sec:Spectr-Anal}Spectral Analysis of the 2nd Variation of
Area Form}

To keep the length of formulas to a minimum and focus on the essence
of the argument, we present the details for the two phase partitioning
problem and then indicate how the results generalize to more phases.

\subsection{Two phase partitioning problem}

Let $M$ be the interface of a two phase partitioning problem in $\Omega$,
which is assumed minimal, i.e. $\delta A(M)=0$. For linearized stability
we naturally study the minimal eigenvalue of the bilinear form
\begin{equation}
J(f)=\int_{M}\left(|\mathrm{grad}_{M}\, f\,|^{2}-|B_{M}|^{2}f^{2}\right)-\int_{\partial M}\sigma\, f^{2}\label{eq:J}
\end{equation}
For brevity we will write $\nabla^{M}f$ in place of $\mathrm{grad}_{M}\, f$.
Although $J$ and $\delta^{2}A^{\star}(M)$ are identical expressions,
we consider them from a different point of view: for the purposes
of spectral analysis $M$ is \emph{fixed} and $J$ is a nonlinear
functional on a properly defined functional space on $M$ containing
the admissible variations of $M$; thus its elements $f$ satisfy
the conditions of volume constancy
\begin{equation}
\int_{M}f=0\label{eq:vol-const}
\end{equation}
and the normalization condition
\begin{equation}
\int_{M}f^{2}=1.\label{eq:normal}
\end{equation}
As a matter of convenience, we introduce Lagrange multipliers and
the corresponding modified functional
\[
J^{\star}(f;\lambda,\mu)=\int_{M}\left(|\nabla^{M}f\,|^{2}-|B_{M}|^{2}f^{2}\right)-\int_{\partial M}\sigma\, f^{2}-\lambda\int_{M}f-\mu\int_{M}f^{2}
\]
and we are interested in the critical points of $J^{\star}$ or $J$
with the conditions (\ref{eq:vol-const}) and (\ref{eq:normal}).
\begin{prop}
\label{prop:eigv-prob}A necessary and sufficient condition for a
$C^{2}$ function $f$ on $M$ to be a critical point of $J^{\star}$,
or equivalently $J$ with the conditions (\ref{eq:vol-const}) and
(\ref{eq:normal}), is that it satisfies the following inhomogeneous
PDE with Neumann boundary condition:
\begin{equation}
\Delta_{M}f+(\mu+|B_{M}|^{2})f=-\frac{\lambda}{2}\label{eq:eigv-PDE}
\end{equation}
\begin{equation}
D_{\nu}f=\sigma f\quad\textrm{on}\,\partial M\label{eq:eigv-BC}
\end{equation}
\end{prop}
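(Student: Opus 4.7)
The plan is to recognize this as a standard Euler--Lagrange calculation: take an arbitrary smooth test function $\varphi$ on $M$, perturb $f \mapsto f+\varepsilon\varphi$, and demand that $\tfrac{d}{d\varepsilon}J^{\star}(f+\varepsilon\varphi;\lambda,\mu)\big|_{\varepsilon=0}=0$. Carrying out the derivative term by term, each quadratic piece contributes a factor of $2$ and the linear Lagrange multiplier term contributes once, giving
\[
\delta J^{\star}\cdot\varphi = 2\int_{M}\bigl(\nabla^{M}f\cdot\nabla^{M}\varphi - |B_{M}|^{2}f\varphi - \mu f\varphi\bigr) - 2\int_{\partial M}\sigma f\varphi - \lambda\int_{M}\varphi.
\]
Since this must vanish for every $\varphi\in C^{2}(M)$ (no constraints on $\varphi$, as the multipliers $\lambda,\mu$ already absorb them), we need to move all derivatives off $\varphi$.

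The key analytic step is Green's identity on the manifold $M$ with boundary $\partial M$:
\[
\int_{M}\nabla^{M}f\cdot\nabla^{M}\varphi = -\int_{M}\varphi\,\Delta_{M}f + \int_{\partial M}\varphi\,D_{\nu}f,
\]
where $\nu$ is the outward conormal of $\partial M$ in $M$ (the same conormal used in Proposition \ref{prop:gen-2nd-var}). Substituting and collecting terms yields
\[
\int_{M}\varphi\Bigl[-2\Delta_{M}f - 2(\mu+|B_{M}|^{2})f - \lambda\Bigr] + 2\int_{\partial M}\varphi\bigl[D_{\nu}f - \sigma f\bigr] = 0.
\]

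Now apply the fundamental lemma of the calculus of variations in two stages. First restrict $\varphi$ to have compact support in the interior of $M$; the boundary integral vanishes, and the arbitrariness of $\varphi$ in $M\setminus\partial M$ forces the bulk integrand to be zero pointwise, which is exactly the PDE \eqref{eq:eigv-PDE}. Plugging this back in leaves the boundary integral alone, and letting $\varphi$ have arbitrary trace on $\partial M$ forces $D_{\nu}f-\sigma f=0$ on $\partial M$, which is the Neumann-type condition \eqref{eq:eigv-BC}. The converse direction (sufficiency) is immediate: if $f$ satisfies \eqref{eq:eigv-PDE} and \eqref{eq:eigv-BC}, then reversing the integration by parts shows that $\delta J^{\star}\cdot\varphi=0$ for every $\varphi$, so $f$ is a critical point.

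There is essentially no obstacle beyond careful bookkeeping; the only point that deserves attention is the equivalence between "critical point of the constrained functional $J$" and "critical point of the unconstrained $J^{\star}(\,\cdot\,;\lambda,\mu)$'' for suitable multipliers. This is standard Lagrange multiplier theory once one observes that the constraint maps $f\mapsto\int_{M}f$ and $f\mapsto\int_{M}f^{2}-1$ have linearly independent differentials at any $f\not\equiv 0$ satisfying \eqref{eq:vol-const} and \eqref{eq:normal}, so that the Lagrange multiplier rule applies and furnishes exactly the two multipliers $\lambda,\mu$ appearing in \eqref{eq:eigv-PDE}.
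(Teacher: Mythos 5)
Your proposal is correct and follows essentially the same route as the paper: compute $\delta J^{\star}(f)\phi$ by differentiating $J^{\star}(f+t\phi)$ at $t=0$, integrate by parts via Green's formula on $M$, and apply the fundamental lemma first for test functions compactly supported in the interior (yielding \eqref{eq:eigv-PDE}) and then for test functions with nonvanishing boundary trace (yielding \eqref{eq:eigv-BC}), with the converse following by reversing the integration by parts. Your added remark justifying the equivalence of the constrained and multiplier formulations via the linear independence of the constraint differentials is a small but welcome supplement to what the paper merely asserts.
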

\begin{rem}
In (\ref{eq:eigv-PDE}) $\Delta_{M}$ is the Laplace-Beltrami operator
on $M$ defined by
\[
\Delta_{M}f=\mathrm{div}_{M}(\nabla^{M}f)=g^{-1/2}(g^{1/2}g^{ij}f_{,j})_{,i}
\]
in a local coordinate system $q^{1},\cdots,q^{N-1}$, where $g=\det(g_{ij})$,
$g_{ij}$ is the metric tensor and the comma operator denotes partial
derivative in the respective coordinate, i.e. $f_{,i}=\frac{\partial f}{\partial q^{i}}=D_{E_{i}}f$.
The summation convention on pairs of identical indices is assumed
throughout this paper. As $M$ is fixed, $g_{ij}$ is fixed and (\ref{eq:eigv-PDE})
is a linear equation.
\end{rem}

\begin{rem}
The $C^{2}$ condition on $f$ can be relaxed by considering the weak
form of (\ref{eq:eigv-PDE}), (\ref{eq:eigv-BC}).\end{rem}
\begin{proof}
The first variation of $J^{\star}$ is given by
\[
\begin{alignedat}{1}\delta J^{\star}(f)\phi & =\left.\frac{d}{dt}J^{\star}(f+t\phi)\right|_{t=0}\\
 & =2\int_{M}\left(\nabla^{M}f\cdot\nabla^{M}\phi-|B_{M}|^{2}f\phi\right)-2\int_{\partial M}\sigma f\phi-2\mu\int_{M}f\phi-\lambda\int_{M}\phi
\end{alignedat}
\]
By Green's formula for manifolds we obtain
\[
\delta J^{\star}(f)\phi=-2\int_{M}\left(\Delta_{M}f+|B_{M}|^{2}f+\mu f+\tfrac{1}{2}\lambda\right)\phi+2\int_{\partial M}(\nabla^{M}f\cdot\nu-\sigma f)\phi.
\]
When $\phi$ is a $C^{\infty}$ function with compact support in the
interior of $M$, the second integral on the right side drops and
by the fundamental lemma of the calculus of variations we obtain (\ref{eq:eigv-PDE}).
When the support of $\phi$ intersects the boundary of $M$, $\delta J^{\star}(f)\phi=2\int_{\partial M}(\nabla^{M}f\cdot\nu-\sigma f)\phi$
and, again by the same argumentation, we obtain (\ref{eq:eigv-BC}),
in view of the identity $D_{\nu}f=\nabla^{M}f\cdot\nu$. The converse
is trivial.
\end{proof}
Two are the relevant problems: (a) given a partitioning, to show that
it is unstable, i.e. to find \emph{a particular} admissible variation
$f$ such that $J(f)<0$, and (b) to prove that a partitioning $M$
is stable, i.e. \emph{for any} admissible variation $f\neq0$ we have
$J(f)>0$. The proposition next shows that for problems of the first
category it suffices to find a negative eigenvalue $\mu<0$ of problem
(\ref{eq:eigv-PDE}).
\begin{prop}
\label{prop:Categ-a}Let $M$ be a minimal two phase partitioning
and $f$ an eigenfunction of problem (\ref{eq:eigv-PDE}), (\ref{eq:eigv-BC})
with corresponding eigenvalue $\mu$. Then
\begin{equation}
J(f)=\mu.\label{eq:J-min}
\end{equation}
In particular, if $\mu<0$, $M$ is unstable.\end{prop}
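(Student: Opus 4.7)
The plan is to multiply the eigenvalue equation (\ref{eq:eigv-PDE}) by $f$ and integrate over $M$, using the manifold Green's formula together with the Neumann-type boundary condition (\ref{eq:eigv-BC}) to convert the $\int_M f\Delta_M f$ term into $-\int_M|\nabla^M f|^2+\int_{\partial M}\sigma f^2$. The volume constraint (\ref{eq:vol-const}) kills the inhomogeneous term $-\lambda/2$ on the right-hand side, and the normalization (\ref{eq:normal}) identifies the $\mu\int_M f^2$ term with $\mu$ itself. Reading off the remaining terms reproduces exactly $J(f)$ as defined in (\ref{eq:J}).

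In more detail, I would first multiply (\ref{eq:eigv-PDE}) by $f$ and integrate, obtaining
\[
\int_M f\,\Delta_M f+\mu\int_M f^2+\int_M |B_M|^2 f^2=-\tfrac{\lambda}{2}\int_M f.
\]
Since $f$ satisfies (\ref{eq:vol-const}), the right-hand side vanishes. Then I invoke the manifold Green's identity (the same one used in the proof of Proposition \ref{prop:eigv-prob}),
\[
\int_M f\,\Delta_M f=-\int_M |\nabla^M f|^2+\int_{\partial M} f\, D_\nu f,
\]
and substitute the boundary condition $D_\nu f=\sigma f$ to replace the boundary integral by $\int_{\partial M}\sigma f^2$. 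Rearranging and using $\int_M f^2=1$ gives
\[
\mu=\int_M\bigl(|\nabla^M f|^2-|B_M|^2 f^2\bigr)-\int_{\partial M}\sigma f^2=J(f),
\]
which is (\ref{eq:J-min}).

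For the final assertion, an admissible variation for the stability analysis is any $f\neq 0$ satisfying (\ref{eq:vol-const}); the normalization (\ref{eq:normal}) can be imposed by rescaling without affecting the sign of $J$. Thus if $\mu<0$, then $f$ is an admissible variation with $J(f)=\mu<0$, and by Theorem \ref{thm:SZ} (in its restated form from Proposition \ref{prop:gen-2nd-var}) this gives $\delta^2 A(M)<0$, so $M$ is unstable in the sense defined after the statement of Theorem \ref{thm:SZ}.

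There is no real obstacle here: the argument is just the standard Rayleigh-quotient computation for a self-adjoint eigenvalue problem with Robin-type boundary conditions, and the only subtlety is making sure the inhomogeneous term $-\lambda/2$ disappears, which it does precisely because admissible variations are mean-zero. The more substantial work lies ahead in Proposition \ref{prop:Categ-b}, where the converse direction (that the principal eigenvalue bounds $J$ from below over \emph{all} admissible variations, not just eigenfunctions) requires the interpolation estimate for the boundary integral mentioned in the introduction.
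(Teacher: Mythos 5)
Your proposal is correct and follows essentially the same route as the paper's own proof: multiply (\ref{eq:eigv-PDE}) by $f$, integrate, apply Green's formula with the boundary condition (\ref{eq:eigv-BC}), and use the constraints (\ref{eq:vol-const}), (\ref{eq:normal}) to read off $J(f)=\mu$. The concluding remark on instability also matches the paper's (terser) "follows trivially" step.
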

\begin{rem}
Proposition \ref{prop:Categ-a} implies that no lower bound is necessary
for the functional $J$, in order to conclude that a minimal partitioning
is unstable, when a negative eigenvalue is at hand. This is in contrast
with problems of category (b).\end{rem}
\begin{proof}
Multiplication of (\ref{eq:eigv-PDE}) by $f$ and integration over
$M$ gives in view of (\ref{eq:vol-const}) and (\ref{eq:normal}):
\[
\int_{M}f\Delta_{M}f+\mu+\int_{M}|B_{M}|^{2}f^{2}=0
\]
Application of Green's formula on the first integral gives
\[
-\int_{M}|\nabla^{M}f|^{2}+\int_{\partial M}f\nu\cdot\nabla^{M}f+\mu+\int_{M}|B_{M}|^{2}f^{2}=0.
\]
By (\ref{eq:J}) and (\ref{eq:eigv-BC}) we obtain $J(f)=\mu$. The
second assertion follows trivially from this.
\end{proof}
In class (b) we need to know in advance that the functional $J$ has
a minimum under the conditions (\ref{eq:vol-const}) and (\ref{eq:normal}).
The difficulty is that the boundary integral $\int_{\partial M}f^{2}$
cannot be bounded above by $\int_{M}f^{2}$. However, if $f\in W^{1,2}(M)\equiv H^{1}(M)$,
by the boundary trace embedding theorem (\citep{key-18} §§ 5.34-5.37,
pp 163-166; \citep{key-17-1} § 8, pp 120-132) we have
\begin{equation}
\int_{\partial M}f^{2}\leqslant c_{BT}^{2}\left(\int_{M}|\nabla^{M}f\,|^{2}+\int_{M}f^{2}\right)\label{eq:BT-std}
\end{equation}
where $c_{BT}$ is a constant depending on $M$. Now using this estimate
for the boundary integral and the estimates $|B_{M}|^{2}\leqslant b_{0}^{2}$,
$\sigma\leqslant\sigma_{0}$, $b_{0}$ and $\sigma_{0}$ being certain
constants depending on $M$ and $\Sigma$, we obtain
\begin{equation}
\begin{alignedat}{1}J(f) & \geqslant\int_{M}|\nabla^{M}f\,|^{2}-b_{0}^{2}\int_{M}f^{2}-\sigma_{0}c_{BT}^{2}\left(\int_{M}|\nabla^{M}f\,|^{2}+\int_{M}f^{2}\right)\\
 & =(1-\sigma_{0}c_{BT}^{2})\int_{M}|\nabla^{M}f\,|^{2}-(b_{0}^{2}+\sigma_{0}c_{BT}^{2})\int_{M}f^{2}
\end{alignedat}
\label{eq:J-est-orig}
\end{equation}
from which we can conclude coercivity of $J$ if $\sigma_{0}<1/c_{BT}^{2}$.
In this way we can prove (see \citep{key-19}, Theorem 1.2, p.4) that
for sufficiently small principal curvatures of $\Sigma$ \emph{in
a neighborhood of} $\partial M$, $J$ has a minimum, which is necessarily
a critical point of $J^{\star}$, hence an eigenfunction of (\ref{eq:eigv-PDE})
with BC (\ref{eq:eigv-BC}). As a consequence, by (\ref{eq:J-min}),
if $J^{\star}$ has no non-positive eigenvalues, we can draw the conclusion
that $M$ is stable.

We can drop the hypothesis of sufficiently small principal curvatures
of $\Sigma$ in a neighborhood of\emph{ }$\partial M$ by replacing
(\ref{eq:BT-std}) by an interpolation estimate. The standard notation
for Sobolev spaces is used: $|u|_{L^{2}(M)}=(\int_{M}u^{2})^{1/2}$,
$|u|_{H^{1}(M)}=(|u|_{L^{2}(M)}^{2}+|\nabla^{M}u|_{L^{2}(M)}^{2})^{1/2}$,
$|u|_{L^{2}(\partial M)}=(\int_{\partial M}u^{2})^{1/2}$ are the
standard norms of $L^{2}(M)$, $H^{1}(M)$ and $L^{2}(\partial M)$.
\begin{lem}
\label{lem:interp-est}Let $M$ be a bounded $C^{2,\alpha}$ submanifold
of $\mathbb{R}^{N}$ with boundary. Then for every $\epsilon>0$ there
is a constant $c_{\epsilon}$ such that for any $u\in H^{1}(M)$
\begin{equation}
|u|_{L^{2}(\partial M)}\leqslant\epsilon|u|_{H^{1}(M)}+c_{\epsilon}|u|_{L^{2}(M)}\label{eq:BT-new}
\end{equation}
\end{lem}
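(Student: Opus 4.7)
The plan is a standard compactness/contradiction argument resting on the continuity and compactness of the trace operator $T: H^1(M) \to L^2(\partial M)$. Since $M$ is a compact $C^{2,\alpha}$ manifold with boundary, the trace theorem furnishes a bounded linear $T: H^1(M) \to H^{1/2}(\partial M)$; composing with Rellich--Kondrachov on the compact closed manifold $\partial M$, the embedding $H^{1/2}(\partial M) \hookrightarrow L^2(\partial M)$ is compact, so $T: H^1(M) \to L^2(\partial M)$ is itself a compact operator. This refinement of (\ref{eq:BT-std}) is the key ingredient.

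Argue by contradiction: suppose (\ref{eq:BT-new}) fails for some $\epsilon_0 > 0$. Then for each $n$ there is $u_n \in H^1(M)$ with
\[
|u_n|_{L^2(\partial M)} > \epsilon_0\, |u_n|_{H^1(M)} + n\, |u_n|_{L^2(M)}.
\]
Normalize so that $|u_n|_{L^2(\partial M)} = 1$; then $|u_n|_{H^1(M)} \leq 1/\epsilon_0$ and $|u_n|_{L^2(M)} \leq 1/n \to 0$. Along a subsequence (still labeled $u_n$), $u_n \rightharpoonup u$ weakly in $H^1(M)$, and by Rellich--Kondrachov on $M$, $u_n \to u$ strongly in $L^2(M)$. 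Uniqueness of limits combined with $|u_n|_{L^2(M)} \to 0$ forces $u = 0$ in $L^2(M)$, and hence $u = 0$ in $H^1(M)$. But then compactness of $T$ gives $u_n|_{\partial M} \to T(0) = 0$ strongly in $L^2(\partial M)$, contradicting $|u_n|_{L^2(\partial M)} = 1$ for every $n$.

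The only genuine obstacle is justifying the trace machinery on a $C^{2,\alpha}$ manifold with boundary rather than on a Euclidean domain; this is standard and follows by a partition-of-unity argument reducing each boundary chart to the half-space case, where the classical continuous trace $H^1(\mathbb{R}^N_+) \to H^{1/2}(\mathbb{R}^{N-1})$ applies. An alternative, more elementary route bypasses $H^{1/2}(\partial M)$ altogether: work in a collar neighborhood $\partial M \times [0,\delta)$ with coordinates $(q,s)$, pick a smooth cutoff $\phi$ with $\phi(0) = 1$ and $\phi(\delta) = 0$, and write
\[
u(q,0)^2 = -\int_0^\delta \partial_s\bigl(\phi(s)\, u(q,s)^2\bigr)\, ds;
\]
applying Cauchy--Schwarz with parameter $\epsilon$ to the cross term $2\phi\, u\, \partial_s u$ and integrating over $\partial M$ using the bounded Jacobian of the normal exponential map yields (\ref{eq:BT-new}) directly, with explicit $c_\epsilon$ of order $\epsilon^{-1} + \delta^{-1}$. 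Either route suffices.
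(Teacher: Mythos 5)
Your proof is correct and follows essentially the same route as the paper: a contradiction argument from a minimizing sequence, using boundedness in $H^1(M)$, strong $L^2(M)$ convergence to zero, and compactness of the trace embedding $H^1(M)\hookrightarrow L^2(\partial M)$ to force the boundary norms to vanish. The only differences are cosmetic (you normalize $|u_n|_{L^2(\partial M)}=1$ where the paper normalizes $|u_n|_{H^1(M)}=1$), and your alternative collar-neighborhood computation parallels the direct estimate the paper gives separately in Example \ref{ex:estim-direct}.
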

\begin{proof}
We prove the estimate by contradiction. Assume there is a $\epsilon_{0}>0$
for which (\ref{eq:BT-new}) does not hold. Then for each $n\in\mathbb{N}$
there is a $u_{n}\in H^{1}(M)$ such that $|u_{n}|_{H^{1}(M)}=1$
and
\begin{equation}
|u_{n}|_{L^{2}(\partial M)}>\epsilon_{0}+n|u_{n}|_{L^{2}(M)}\label{eq:BT-new-hyp}
\end{equation}
By (\ref{eq:BT-new-hyp}) and (\ref{eq:BT-std}) we obtain
\[
|u_{n}|_{L^{2}(M)}<\frac{c_{BT}-\epsilon_{0}}{n}\to0,\; n\to\infty.
\]
Since $(u_{n})$ is bounded in $H^{1}(M)$, we have, possibly by passing
to a subsequence, $u_{n}\overset{H^{1}}{\rightharpoonup}u$. By $u_{n}\to0$
in $L^{2}(M)$ we easily conclude $u=0$. From the compactness of
the embedding $H^{1}(M)\hookrightarrow L^{2}(\partial M)$ (see Adams
\citep{key-18} §§ 5.34-5.37, pp 163-166 for the flat case) it follows
that $u_{n}\to0$ in $L^{2}(\partial M)$, which contradicts (\ref{eq:BT-new-hyp}).
\end{proof}
As an example we prove (\ref{eq:BT-new}) directly for a bounded hypersurface
$M$ of $\mathbb{R}^{N}$ with boundary.
\begin{example}
\label{ex:estim-direct}Assume that there is a $x_{0}\in\mathbb{R}^{N}$
such that $x_{0}\cdot\nu(p)>0$ for all $p\in\partial M$. Without
loss of generality set $x_{0}=0$. For $u\in C^{\infty}(M)$, $x$
being the position vector in $\mathbb{R}^{N}$, by (\ref{eq:div-formula})
\[
\int_{M}\mathrm{div}_{M}(xu^{2})=-\int_{M}H\cdot xu^{2}+\int_{\partial M}(x\cdot\nu)u^{2}
\]
we obtain
\begin{equation}
\begin{alignedat}{1}\int_{\partial M}(x\cdot\nu)u^{2} & =\int_{M}H\cdot xu^{2}+\int_{M}(u^{2}\mathrm{div}_{M}x+2ux\cdot\nabla^{M}u)\\
 & \leqslant\frac{1}{2}\int_{M}(H\cdot x)^{2}u^{2}+\frac{1}{2}\int_{M}u^{2}+\int_{M}u^{2}\mathrm{div}_{M}x\\
 & +\frac{1}{\epsilon^{2}}\int_{M}|x|^{2}u^{2}+\epsilon^{2}\int_{M}|\nabla^{M}u|^{2}
\end{alignedat}
\label{eq:ex-2}
\end{equation}
By the compactness of $\partial M$ and $\overline{M}$ there are
positive constants $c_{0}$, $c_{1}$, $c_{2}$ such that $x\cdot\nu\geqslant c_{0}$,
$|x|^{2}\leqslant c_{1}$ and $|H|\leqslant c_{2}$. To compute $\mathrm{div}_{M}x$
apply the definition of operator $\mathrm{div}_{M}$ (Simon \citep{key-16})
in a chart with basis vector fields $E_{1},\cdots,E_{N-1}$: 
\[
\mathrm{div}_{M}x=g^{ij}\left\langle D_{E_{i}}x,E_{j}\right\rangle =g^{ij}\left\langle E_{i},E_{j}\right\rangle =g^{ij}g_{ij}=N-1
\]
By (\ref{eq:ex-2}) we obtain
\[
c_{0}|u|_{L^{2}(\partial M)}^{2}\leqslant\epsilon^{2}|u|_{H^{1}(M)}^{2}+\left(N-\frac{1}{2}+\frac{1}{2}c_{1}c_{2}^{2}+\frac{1}{\epsilon^{2}}\right)|u|_{L^{2}(M)}^{2}.
\]
By a density argument we extend to $u\in H^{1}(M)$. \hfill{} $\square$
\end{example}
Estimate (\ref{eq:BT-new}) makes it possible to select $\epsilon>0$
so small that the coefficient of $\int_{M}|\nabla^{M}f\,|^{2}$ in
(\ref{eq:J-est-orig}) becomes positive. The following unequality
replaces (\ref{eq:J-est-orig}):
\begin{equation}
J(f)\geqslant(1-2\sigma_{0}\epsilon^{2})|u|_{H^{1}(M)}^{2}-(b_{0}^{2}+1+2\sigma_{0}c_{\epsilon}^{2})|u|_{L^{2}(M)}^{2}.\label{eq:J-est-new}
\end{equation}
Now we can conveniently prove the main result for problems of class
(b):
\begin{prop}
\label{prop:Categ-b}Let $M$ be a minimal two phase partitioning
in $\Omega\subset\mathbb{R}^{N}$. Then for any $f\in H^{1}(M)$ satisfying
(\ref{eq:vol-const}), (\ref{eq:normal}) we have 
\begin{equation}
J(f)\geqslant\mu_{1}\label{eq:J-min-eigv}
\end{equation}
where $\mu_{1}$ is the smallest eigenvalue of problem (\ref{eq:eigv-PDE}),
(\ref{eq:eigv-BC}). In particular, if $\mu_{1}>0$, $M$ is stable.\end{prop}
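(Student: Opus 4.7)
The plan is to use the direct method of the calculus of variations. Lemma \ref{lem:interp-est} gives $J$ the coercivity needed to extract a minimizer on the constrained set, and the minimizer must satisfy the Euler--Lagrange equations, which are precisely (\ref{eq:eigv-PDE})--(\ref{eq:eigv-BC}). Evaluating $J$ at the minimizer, as in Proposition \ref{prop:Categ-a}, will then give the corresponding eigenvalue, which one identifies with $\mu_{1}$.

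First I would establish coercivity. Fixing $\epsilon>0$ with $1-2\sigma_{0}\epsilon^{2}>0$ and using (\ref{eq:BT-new}) exactly as in the derivation of (\ref{eq:J-est-new}), together with the normalization $|f|_{L^{2}(M)}^{2}=1$, I obtain
\[
J(f)\geqslant (1-2\sigma_{0}\epsilon^{2})|f|_{H^{1}(M)}^{2}-(b_{0}^{2}+1+2\sigma_{0}c_{\epsilon}^{2}).
\]
In particular $\inf J$ is finite, and every minimizing sequence $(f_{n})$ is bounded in $H^{1}(M)$.

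Next I would pass to the limit. By reflexivity a subsequence converges weakly, $f_{n}\rightharpoonup f_{*}$ in $H^{1}(M)$. Rellich--Kondrachov gives strong convergence in $L^{2}(M)$, and compactness of the trace embedding $H^{1}(M)\hookrightarrow L^{2}(\partial M)$ gives strong convergence in $L^{2}(\partial M)$. The constraints (\ref{eq:vol-const}), (\ref{eq:normal}) pass to the limit, the curvature and boundary terms $\int_{M}|B_{M}|^{2}f_{n}^{2}$, $\int_{\partial M}\sigma f_{n}^{2}$ converge by the $L^{\infty}$ bounds on $|B_{M}|^{2}$ and $\sigma$, and weak lower semicontinuity of $\int_{M}|\nabla^{M}f|^{2}$ yields $J(f_{*})\leqslant\liminf J(f_{n})=\inf J$. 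Hence the infimum is attained at $f_{*}$.

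Finally I would identify the value $J(f_{*})$. By the Lagrange multiplier rule and the weak version of Proposition \ref{prop:eigv-prob}, $f_{*}$ solves (\ref{eq:eigv-PDE})--(\ref{eq:eigv-BC}) for some $\lambda$ and some $\mu$; repeating verbatim the calculation in the proof of Proposition \ref{prop:Categ-a} (multiply (\ref{eq:eigv-PDE}) by $f_{*}$, integrate, apply Green's formula together with (\ref{eq:eigv-BC}), (\ref{eq:vol-const}), (\ref{eq:normal})) yields $J(f_{*})=\mu$. For any other eigenpair $(f',\mu')$ with $f'$ admissible, the same computation gives $J(f')=\mu'\geqslant J(f_{*})=\mu$, so $\mu$ is the smallest eigenvalue, i.e.\ $\mu=\mu_{1}$, and $J(f)\geqslant\mu_{1}$ for every admissible $f$.

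The hard part is the boundary integral $\int_{\partial M}\sigma f^{2}$, which is not dominated by $|f|_{L^{2}(M)}^{2}$ and would block coercivity if one only had the crude trace estimate (\ref{eq:BT-std}). The interpolation estimate (\ref{eq:BT-new}) is precisely what lets us arrange the coefficient of $|f|_{H^{1}(M)}^{2}$ in (\ref{eq:J-est-new}) to be strictly positive without any hypothesis on the size of $\sigma$, so that the direct method applies in full generality; also, because we work with admissible $f\in H^{1}(M)$ that need not satisfy $D_{\nu}f=\sigma f$, the boundary condition must emerge as a \emph{natural} condition from the variational principle rather than be imposed on the function space.
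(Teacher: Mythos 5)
Your proposal is correct and follows essentially the same route as the paper: coercivity of $J$ on the constraint set via the interpolation estimate (\ref{eq:BT-new}) and (\ref{eq:J-est-new}), the direct method (weak compactness plus sequential weak lower semicontinuity using the compact embeddings) to obtain a minimizer, identification of the minimizer as a solution of (\ref{eq:eigv-PDE})--(\ref{eq:eigv-BC}) with natural boundary condition, and the computation of Proposition \ref{prop:Categ-a} to evaluate $J$ there. The only cosmetic difference is that the paper minimizes over the weakly closed convex set $\{|u|_{L^{2}(M)}\leqslant1,\ \int_{M}u=0\}$ and invokes Struwe's Theorem 1.2, whereas you run the minimizing-sequence argument directly on the sphere $|f|_{L^{2}(M)}=1$, which is legitimate since the compact embedding makes that constraint weakly closed.
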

\begin{proof}
Let $X=\{u\in H^{1}(M):|u|_{L^{2}(M)}\leqslant1,\,\int_{M}u=0\}$.
By the continuity of the $L^{2}(M)$-norm and $u\mapsto\int_{M}u$
as mappings $H^{1}(M)\to\mathbb{R}$, it follows that $X$ is a closed
subset of $H^{1}(M)$. The convexity of $X$ is clear. Hence $X$
is a weakly closed subset of $H^{1}(M)$. By (\ref{eq:J-est-new})
it follows immediately that $J$ is coercive on $X$. The sequential
weakly lower semicontinuity of $J$ follows from the sequential weakly
lower semicontinuity of the norm of $H^{1}(M)$ and the compactness
of the embedding $H^{1}(M)\hookrightarrow L^{2}(M)$: $u_{n}\overset{H^{1}}{\rightharpoonup}u$
implies $u_{n}\overset{L^{2}}{\to}u$. The conditions in \citep{key-19}
Theorem 1.2 are satisfied and by this we conclude that $J$ attains
its infimum in $X$. The position of the infimum is a critical point
of $J^{\star}$ and, as it was shown in Proposition \ref{prop:Categ-a},
it is a solution of equation (\ref{eq:eigv-PDE}) with BC (\ref{eq:eigv-BC}).
Inequality (\ref{eq:J-min-eigv}) follows trivially form this.
\end{proof}

\subsection{Three and more phases}

Since disconnected partitionings in strictly convex sets are always
unstable according to Proposition \ref{prop:SZ-gen}, we need only
consider connected partitionings. The functional $J$ for the $(m+1)$-phase
partitioning problem reads:
\begin{equation}
J(f_{1},\dots,f_{m})=\sum_{i=1}^{m}\gamma_{i}\int_{M_{i}}\left(|\mathrm{grad}_{M_{i}}\, f_{i}|^{2}-|B_{M_{i}}|^{2}f_{i}^{2}\right)-\sum_{i=1}^{m}\gamma_{i}\int_{\partial M_{i}}\sigma_{i}f_{i}^{2}\label{eq:J-gen}
\end{equation}
The volume constraints are
\begin{equation}
\int_{M_{i}}f_{i}=0,\; i=1,\cdots,m\label{eq:vol-const-gen}
\end{equation}
the normalization condition is
\begin{equation}
\sum_{i=1}^{m}\int_{M_{i}}f_{i}^{2}=1\label{eq:normal-gen}
\end{equation}
and the corresponding modified functional is
\[
\begin{alignedat}{1}J^{\star}(f_{1},\cdots,f_{m};\lambda_{1},\cdots,\lambda_{m},\mu) & =\sum_{i=1}^{m}\gamma_{i}\left[\int_{M_{i}}\left(|\nabla^{M_{i}}f_{i}|^{2}-|B_{M_{i}}|^{2}f_{i}^{2}\right)-\int_{\partial M_{i}}\sigma_{i}f_{i}^{2}\right]\\
 & -\sum_{i=1}^{m}\lambda_{i}\int_{M_{i}}f_{i}-\mu\sum_{i=1}^{m}\int_{M_{i}}f_{i}^{2}
\end{alignedat}
\]

Proposition \ref{prop:eigv-prob} extends without difficulty to connected
multiphase problems, with (\ref{eq:eigv-PDE}) holding on each inteface
in the following form:
\begin{equation}
\gamma_{i}\Delta_{M_{i}}f_{i}+(\mu+\gamma_{i}|B_{M_{i}}|^{2})f_{i}=-\frac{\lambda_{i}}{2}\label{eq:eigv-PDE-gen}
\end{equation}
The boundary conditions retain the form of (\ref{eq:eigv-BC}) for
each $i$. Propositions \ref{prop:Categ-a}, \ref{prop:Categ-b} hold
as they are. In the proof of Proposition \ref{prop:Categ-b} the considered
space is 
\[
X=\{(u_{1},\cdots,u_{m})\in H^{1}(M):|u|_{L^{2}(M)}\leqslant1,\,\int_{M_{i}}u_{i}=0,\, i=1,\cdots,m\}
\]
Here we have the following definitions: $H^{1}(M)=H^{1}(M_{1})\times\cdots\times H^{1}(M_{m})$,
$L^{2}(M)=L^{2}(M_{1})\times\cdots\times L^{2}(M_{m})$ and $|u|_{L^{2}(M)}^{2}=\sum_{i=1}^{m}|u|_{L^{2}(M_{i})}^{2}$
as usually.

The proposition next characterizes all connected $m$-phase partitionings
by stating that each $m$-phase partitioning problem reduces to $m-1$
independent 2-phase problems.
\begin{prop}
\label{prop:reduc}Let $\Omega$ be an open, bounded subset of $\mathbb{R}^{N}$
with $C^{2}$ boundary and $M=(M_{i})_{i=1}^{m-1}$ a connected minimal
$m$-phase partitioning of $\Omega$ with volume constancy constraint.
Then $M$ is stable if and only if each $M_{i}$ $(i=1,\cdots,m-1)$
is stable. $M$ is unstable if and only if there is at least one unstable
$M_{i}$ $(i=1,\cdots,m-1)$.
\end{prop}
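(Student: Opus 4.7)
The plan is to exploit the observation that, for a \emph{connected} partition with no triple junctions, both the functional $J$ in (\ref{eq:J-gen}) and the admissible variation class split into a direct sum over the individual interfaces $M_{i}$. Concretely, I would first argue that the volume-constancy constraints (\ref{eq:vol-const-gen}) decouple: in a connected $m$-phase configuration the $m-1$ interfaces can be put in one-to-one correspondence with the $m-1$ independent phase-volume constraints (this is a consequence of the tree-like adjacency structure forced by the two-phase contact assumption, exactly as in the chain $\Omega_{1}\mid_{M_{1}}\Omega_{2}\mid_{M_{2}}\cdots\mid_{M_{m-1}}\Omega_{m}$), so each constraint reduces to $\int_{M_{i}}f_{i}=0$ with no cross coupling. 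Once this is in place, the structural identity
\[
J(f_{1},\ldots,f_{m-1})=\sum_{i=1}^{m-1}\gamma_{i}\,J_{i}(f_{i}),
\]
read directly from (\ref{eq:J-gen}), where $J_{i}$ is the two-phase functional (\ref{eq:J}) attached to $M_{i}$, does all the work.

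With decoupling established, the two directions follow with no analytical difficulty. For the ``if'' implication, assume each $M_{i}$ is stable. By Proposition~\ref{prop:Categ-b} applied to the single-interface problem on $M_{i}$, the smallest eigenvalue $\mu_{1}^{(i)}$ of (\ref{eq:eigv-PDE})--(\ref{eq:eigv-BC}) is strictly positive, and consequently
\[
J_{i}(f_{i})\;\geqslant\;\mu_{1}^{(i)}\,|f_{i}|_{L^{2}(M_{i})}^{2}
\]
for every $f_{i}\in H^{1}(M_{i})$ with $\int_{M_{i}}f_{i}=0$. Summing with weights $\gamma_{i}$ and using the normalization (\ref{eq:normal-gen}) yields
\[
J(f_{1},\ldots,f_{m-1})\;\geqslant\;\bigl(\min_{i}\gamma_{i}\mu_{1}^{(i)}\bigr)\sum_{i=1}^{m-1}|f_{i}|_{L^{2}(M_{i})}^{2}\;>\;0
\]
for any nonzero admissible variation, which is exactly stability of $M$.

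For the ``only if'' (contrapositive) direction, suppose some $M_{i_{0}}$ is unstable. Then by Proposition~\ref{prop:Categ-a} (applied with a negative eigenvalue, whose existence follows from instability via Proposition~\ref{prop:Categ-b}) there exists $f_{i_{0}}\in H^{1}(M_{i_{0}})$ with $\int_{M_{i_{0}}}f_{i_{0}}=0$, $|f_{i_{0}}|_{L^{2}(M_{i_{0}})}=1$ and $J_{i_{0}}(f_{i_{0}})<0$. Extend it to $M$ by setting $f_{j}=0$ for $j\neq i_{0}$; this candidate satisfies all the decoupled volume constraints (\ref{eq:vol-const-gen}) and the normalization (\ref{eq:normal-gen}), and
\[
J(f_{1},\ldots,f_{m-1})=\gamma_{i_{0}}J_{i_{0}}(f_{i_{0}})<0,
\]
so $M$ is unstable. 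The two stability equivalences and then the two instability equivalences follow by taking contrapositives.

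The only step with any real content is the decoupling of the volume constraints; everything else is bookkeeping. The expected obstacle, therefore, is making the decoupling argument airtight: one must verify that in the connected two-phase-contact setting the incidence matrix between phases and interfaces has rank $m-1$, so that the $m-1$ independent volume conditions can be rewritten as the pointwise-in-$i$ conditions $\int_{M_{i}}f_{i}=0$. This is a combinatorial statement about the adjacency graph of the phases (a tree under our hypotheses) and does not require additional analysis.
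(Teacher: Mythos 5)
Your proposal is correct and follows essentially the same route as the paper, which simply asserts that the result is immediate because the $M_{i}$ appear individually (with no cross terms) in both the quadratic form $J$ of (\ref{eq:J-gen}) and the constraints (\ref{eq:vol-const-gen}); your extension-by-zero and term-by-term lower-bound arguments, together with the observation that the tree structure of the adjacency graph decouples the volume constraints into $\int_{M_{i}}f_{i}=0$, are exactly the details the paper leaves to the reader.
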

The proof is straight-forward and follows by the fact that the $M_{i}$
appear individually in the form of the second variation of area $J$
and the constraints.

Examples for stable and unstable multiphase partitionings are easily
constructed from 2-phase partitionings by means of Proposition \ref{prop:reduc}.
Therefore our applications focus on 2-phase partitionings.

\section{\label{sec:Apps-RN}Some Applications to Partitioning Problems in
$\mathbb{R}^{N}$}

As applications of the spectral analysis of the 2nd variation of area,
we derive in this section some general conclusions concerning partitionings
in $\mathbb{R}^{N}$.

\subsection{Stability of $N$-dimensional spherical partitionings}

The stability of partitionings in which one phase has the shape of
a sphere, $M=\mathbb{S}^{N-1}$, has a direct physical meaning. It
is the basis for modeling the stability of emulsions, i.e. suspensions
of small liquid droplets or deformable solid particles in a surrounding
fluid. As $\partial M=\emptyset$, all boundary integrals are absent,
in particular the boundary condition (\ref{eq:eigv-BC}). In 3-dimensions
one may directly proceed to the solution of (\ref{eq:eigv-PDE}) in
spherical polar coordinates using periodic conditions in place of
(\ref{eq:eigv-BC}). The spectral theory of the operator $\Delta_{M}$
is well-known (see for example \citep{key-20}, Ch. V, §8, p. 314;
Ch. VII, §5, pp 510-512 and \citep{key-21} §IV.2). Its eigenvalues
are given by $\lambda_{l}=-l(l+1)$, $l=0,1,\cdots$ and the corresponding
eigenvectors are the spherical harmonics $Y_{l}^{m}$, $m=0,\pm1,\cdots\pm l$,
the first of which are
\[
Y_{0}^{0}(\theta,\phi)=\frac{1}{2\sqrt{\pi}},\, Y_{1}^{0}(\theta,\phi)=\frac{3}{2\sqrt{6\pi}}\cos\theta,\, Y_{1}^{\pm1}(\theta,\phi)=\pm\frac{3}{2\sqrt{6\pi}}\sin\theta e^{-i\phi}
\]
Since the eigenvectors of $J$ satisfy the volume constancy condition
(\ref{eq:vol-const}), the first of these is not admissible. Integration
of (\ref{eq:eigv-PDE}) in view of (\ref{eq:vol-const}) gives $\lambda=0$.
Thus the minimal eigenvalue of $J$, as obtained by $l=1$, is $\mu_{1}=l(l+1)-(N-1)=0$,
which whould imply neutral stability. A more careful examination of
these eigenvectors reveals that they are not true variations, but
translations along the axes of the coordinate system. On discarding
them and procceding to the next available eigenvalue, $l=2$, we have
\[
\mu_{1}=l(l+1)-(N-1)=4>0
\]
which implies stabilty. For $N>3$ (see \citep{key-21}) $\lambda_{l}=-l(l+N-2)$,
$l=0,1,\cdots$ and again $\mu_{1}=0$. Discarding translations again
yields stability. In the next section we prove that the same situation
pertains also to the 2-dimensional case. For the following proposition,
no regularity and convexity conditions are necessary for $\Omega$.
\begin{prop}
Let $\Omega\subset\mathbb{R}^{N}$ be an open set and $\Omega_{1}=B(x_{0},R)$
a ball such that $\overline{\Omega}_{1}\subset\Omega$. The two-phase
partitioning of $\Omega$ defined by $M=\partial\Omega_{1}$ is stable.
\end{prop}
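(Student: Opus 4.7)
The plan is to invoke Proposition \ref{prop:Categ-b} after explicitly diagonalizing the Jacobi operator on the round sphere. Because $\overline{\Omega}_{1}\subset\Omega$, the interface $M=\partial B(x_{0},R)$ is a closed $(N-1)$-sphere with $\partial M=\emptyset$, so every boundary integral in $J$ and every boundary condition in (\ref{eq:eigv-PDE})--(\ref{eq:eigv-BC}) drops out. Moreover $M$ has constant scalar mean curvature $(N-1)/R$, so by part (i) of Proposition \ref{prop:gen-2nd-var} the partitioning is automatically a critical point of the area functional under volume constraint.

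The first step is to write down the reduced eigenvalue problem. On a sphere of radius $R$ one has $|B_{M}|^{2}=(N-1)/R^{2}$, so (\ref{eq:eigv-PDE}) becomes $\Delta_{M}f+(\mu+(N-1)/R^{2})f=-\lambda/2$. Integrating over the closed manifold $M$ and invoking $\int_{M}f=0$ together with $\int_{M}\Delta_{M}f=0$ immediately yields $\lambda=0$, reducing matters to the classical Laplace--Beltrami eigenvalue problem on $S^{N-1}(R)$. Its spectrum consists of the eigenvalues $-l(l+N-2)/R^{2}$, $l=0,1,2,\ldots$, with the degree-$l$ spherical harmonics as eigenfunctions. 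Matching coefficients gives the Jacobi eigenvalues
\begin{equation*}
\mu_{l}=\frac{l(l+N-2)-(N-1)}{R^{2}},\qquad l\geqslant 1,
\end{equation*}
the $l=0$ (constant) mode being inadmissible by the volume constraint.

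The main obstacle is the zero eigenvalue $\mu_{1}=0$, which at face value would block strict stability. The resolution, already hinted at in the preceding discussion, is that the $l=1$ eigenspace consists precisely of the functions $f(x)=v\cdot(x-x_{0})/R$ for $v\in\mathbb{R}^{N}$, and these are the normal components on $M$ of the rigid translations $x\mapsto x+tv$. Since $\overline{\Omega}_{1}\subset\Omega$ such translations are admissible for $|t|$ small, extend to diffeomorphisms of $\Omega$ fixing $\partial\Omega$ via a cutoff, and preserve both the area of $M$ and the phase volumes identically; hence they contribute $\delta^{2}A=0$ trivially and count as rigid motions, not genuine shape variations. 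On the $L^{2}(M)$-orthogonal complement of these translation modes (within the admissible space of (\ref{eq:vol-const})--(\ref{eq:normal})), the smallest Jacobi eigenvalue is $\mu_{2}=(N+1)/R^{2}>0$, and Proposition \ref{prop:Categ-b} delivers $J(f)\geqslant\mu_{2}>0$, which is the asserted stability.
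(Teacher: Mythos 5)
Your proposal is correct and follows essentially the same route as the paper: since $\partial M=\emptyset$ all boundary terms drop, the problem reduces to the Laplace--Beltrami spectrum on the sphere, the $l=0$ mode is excluded by the volume constraint, the $l=1$ zero modes are discarded as rigid translations, and positivity of the $l\geqslant 2$ eigenvalues gives stability. The only difference is that you justify the discarding of the translation modes somewhat more explicitly than the paper, which simply asserts that they are ``not true variations.''
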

Note that balls are \emph{never} absolute minimizers of partitioning
problems in bounded sets. This is most conveniently seen by moving
the ball until it makes contact with the boundary, $M\cap\partial\Omega\neq\emptyset$.
Then it is clear that the translated ball is not even minimal, since
at the contact point the normal of $M$ is not tangent to $\partial\Omega$.
This implies the existence of a variation which decreases the ball's
area and this proves that the original ball is not an absolute minimizer.

\subsection{Existence of unstable two phase partitionings}

The existence of stable partitionings is guaranteed by the existence
of absolute minimizers (see \cite{key-23} and \citep{key-19} Th.
1.4, p. 6). Here we give a proof of existence of unstable partitionings
in $\mathbb{R}^{N}$ under certain conditions on $\Omega$. Let $M$
be a minimal two phase partitioning of $\Omega$. We assume $\Omega$
is convex and has the following property: for all sufficiently large
$\sigma$ there is a convex set $\Omega_{\sigma}$ containing $M$
and satisfying the condition
\begin{lyxlist}{00.00.0000}
\item [{(H)}] $\Omega_{\sigma}$ contacts $\Sigma=\partial\Omega$ along
$\partial M$, i.e. $T_{p}\partial\Omega_{\sigma}=T_{p}\Sigma$ at
all $p\in\partial M$, and
\[
II_{\partial\Omega_{\sigma},p}(N_{p},N_{p})\geqslant\sigma,\; p\in\partial M
\]
($N_{p}$ is the normal field of $M$ at $p$.)\end{lyxlist}
\begin{example}
(i) Let $\Omega=B(0,\frac{1}{\sigma_{0}})\subset\mathbb{R}^{2}$,
and $D$ a circle intersecting $\partial\Omega$ at right angles.
Further let $M$ be the circular arc obtained by the intersection
of $\Omega$ and $D$, and $M\cap\partial\Omega=\{p_{1},p_{2}\}$.
If $C_{1}$, $C_{2}$ be the circles of radius $\frac{1}{\sigma}$
contained in $\Omega$ and contacting $\Sigma=\partial\Omega$ at
$p_{1}$, $p_{2}$ respectively, then the convex hull of $M\cup C_{1}\cup C_{2}=\Omega_{\sigma}$
satisfies (H).

(ii) Consider a rhombus and a circular arc with its center positioned
at one of its vertices. Let $\Omega$ be the solid obtained by the
revolution of the rhombus about the axis of the rombus that passes
through the center of the circular arc. Let $M$ be the surface obtained
by the revolution of the arc. By elementary geometric arguments similar
to (i) it is easily established that for all sufficiently large $\sigma$
there is a convex set $\Omega_{\sigma}$ containing $M$ and satisfying
condition (H).\end{example}
\begin{prop}
\label{prop:exist-unst-part}Let $M$ be a minimal partitioning of
$\Omega$ in $\mathbb{R}^{N}$. Assume that $\Omega$ satisfies condition
(H). Then there is a convex set $\Omega^{\star}$ for which $M$ is
an unstable partitioning.\end{prop}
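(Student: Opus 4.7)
The plan is to take $\Omega^\star = \Omega_\sigma$ with $\sigma$ chosen sufficiently large (as permitted by hypothesis (H)), and then exhibit a single admissible variation $f$ of $M$ for which $J(f) < 0$. The mechanism is transparent from the second variation formula of Proposition \ref{prop:gen-2nd-var}(iv): the boundary term $-\int_{\partial M} \sigma f^2$ can be driven to $-\infty$ by increasing $\sigma$, while the interior terms $\int_M(|\nabla^M f|^2 - |B_M|^2 f^2)$ remain fixed once $f$ is chosen.

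The first task is to verify that $M$ is still a minimal two-phase partitioning of $\Omega^\star$, so that Proposition \ref{prop:gen-2nd-var} applies in the new domain. The containment $M \subset \Omega_\sigma$ together with $\partial M \subset \partial \Omega_\sigma$ from (H) ensures that $M$ divides $\Omega^\star$ into two phases. The constant-mean-curvature relations (i)--(ii) of Proposition \ref{prop:gen-2nd-var} depend only on $M$ itself, so they survive the change of ambient domain. The perpendicularity condition (iii), namely $N \in T_p \partial \Omega^\star$ for all $p \in \partial M$, follows from the tangent-space equality $T_p \partial \Omega^\star = T_p \Sigma$ in (H) combined with the original perpendicularity of $M$ to $\Sigma$ in $\Omega$.

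For the test variation, take any smooth $g \colon M \to \mathbb{R}$ such that $f := g - \frac{1}{A(M)} \int_M g$ does not vanish identically on $\partial M$; for instance, a bump function localized near a single point of $\partial M$ will do. Then $\int_M f = 0$ and $\int_{\partial M} f^2 > 0$, so $f$ is admissible with nontrivial boundary trace. Applying Proposition \ref{prop:gen-2nd-var}(iv) in $\Omega^\star$ together with the estimate $II_{\partial \Omega^\star}(N,N) \geq \sigma$ on $\partial M$ provided by (H),
\[
J(f) \;\leq\; \int_M \bigl(|\nabla^M f|^2 - |B_M|^2 f^2\bigr) - \sigma \int_{\partial M} f^2 \;=\; C_f - \sigma \int_{\partial M} f^2,
\]
with $C_f$ independent of $\sigma$. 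Choosing $\sigma$ larger than $C_f / \int_{\partial M} f^2$ gives $J(f) < 0$, so $M$ is unstable in $\Omega^\star$ by the definition of stability. The main obstacle is really the preceding step, namely confirming that $M$ remains a bona fide minimal partitioning after passing from $\Omega$ to $\Omega^\star$; once the tangent-space matching from (H) is exploited, the instability itself is a one-line consequence of the unbounded curvature furnished by (H).
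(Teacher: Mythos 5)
Your proof is correct and rests on exactly the same mechanism as the paper's: hypothesis (H) lets you make $II_{\partial\Omega^\star}(N,N)\geqslant\sigma$ arbitrarily large on $\partial M$ while the interior contribution $\int_M\bigl(|\nabla^M f|^2-|B_M|^2f^2\bigr)$ of a fixed test function stays put, so the boundary term $-\sigma\int_{\partial M}f^2$ eventually dominates. The one genuine difference is the choice of test function. The paper starts from a near-infimizer of $J$ (an admissible $f$ with $J(f)<\mu_1+\epsilon$) and, if that $f$ happens to vanish on $\partial M$, corrects it by adding a constant $f_0$ --- a step that is both unnecessary for the argument and slightly awkward, since $f+f_0$ no longer satisfies the volume constraint $\int_M f=0$. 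You sidestep this entirely by constructing from the outset a mean-zero $f$ with nontrivial boundary trace (a mean-subtracted bump near a point of $\partial M$), which is simpler and cleaner; nothing about the infimum $\mu_1$ is actually needed, since instability only requires one admissible variation with $J(f)<0$. You also make explicit a point the paper leaves implicit, namely that the tangent-space matching $T_p\partial\Omega_\sigma=T_p\Sigma$ in (H) is what guarantees $M$ remains a minimal partitioning of $\Omega^\star$ (in particular that $N_p\in T_p\partial\Omega^\star$), so that the second variation formula applies in the new domain. Both proofs are valid; yours buys economy, the paper's quantifies how far below $\mu_1$ one can push $J_{\Omega^\star}(f)$.
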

\begin{proof}
Assume $M$ is stable or neutrally stable, for otherwise there is
nothing to prove. Let
\[
\mu_{1}=\underset{\int_{M}f=0,\,|f|_{L^{2}(M)}=1}{\inf}J(f)\quad\geqslant0
\]
Let $\epsilon>0$ small and $f$ be a variation of $M$ satisfying
(\ref{eq:normal}), (\ref{eq:vol-const}) and such that $J(f)<\mu_{1}+\epsilon$.
We can assume that $f$ is not identically vanishing on $\partial M$,
for if $f=0$ on $\partial M$, select $f_{0}\in\mathbb{R}$, $f_{0}\neq0$
such that
\[
-\int_{M}|B_{M}|^{2}-\int_{\partial M}II(N,N)<\frac{2}{f_{0}}\int_{M}|B_{M}|^{2}f
\]
and then we have $f+f_{0}=f_{0}\neq0$ on $\partial M$ and
\[
\begin{alignedat}{1}J(f+f_{0}) & =J(f)-f_{0}^{2}\int_{M}|B_{M}|^{2}-2f_{0}\int_{M}|B_{M}|^{2}f-f_{0}^{2}\int_{\partial M}II(N,N)\\
 & <J(f).
\end{alignedat}
\]
Since $\Omega$, $M$ were assumed to satisfy (H) there is a $\sigma>0$
and $\Omega_{\sigma}=:\Omega^{\star}$ such that 
\[
\sigma>\frac{k}{|f|_{L^{2}(\partial M)}^{2}}+\underset{p\in\partial M}{\sup}II_{\partial\Omega,p}(N_{p},N_{p})
\]
and
\[
II_{\partial\Omega^{\star}}(N,N)\geqslant\sigma
\]
on $\partial M$. $k$ is a positive number to be fixed later. We
have
\[
\begin{alignedat}{1}J_{\Omega^{\star}}(f) & =\int_{M}\left(|\nabla^{M}f\,|^{2}-|B_{M}|^{2}f^{2}\right)-\int_{\partial M}II_{\partial\Omega^{\star}}(N,N)\, f^{2}\\
 & \leqslant\int_{M}\left(|\nabla^{M}f\,|^{2}-|B_{M}|^{2}f^{2}\right)-\sigma\int_{\partial M}f^{2}\\
 & \leqslant\int_{M}\left(|\nabla^{M}f\,|^{2}-|B_{M}|^{2}f^{2}\right)-\int_{\partial M}II_{\partial\Omega}(N,N)\, f^{2}-k\\
 & =J(f)-k\leqslant\mu_{1}+\epsilon-k
\end{alignedat}
\]
Choosing $k>\mu_{1}+\epsilon$ completes the proof.
\end{proof}

\section{\label{sec:Apps-2D}Application to 2-Dimensional Partitioning Problems}

Two-dimensional partitionings are particularly simple, for in this
case
\[
\Delta_{M}f=\frac{d^{2}f}{ds^{2}}
\]
where $s$ is the arc length of $M$ and the integrals over $\partial M$
reduce to numbers. The boundary condition (\ref{eq:eigv-BC}) reduces
to
\begin{equation}
\pm\frac{df}{ds}=\sigma f,\;\textrm{at}\, s=0,L\label{eq:BC-2d}
\end{equation}
$L=|M|$ being the length of $M$. The plus sign applies at $s=L$
and minus at $s=0$. We are using the values $\sigma=\sigma_{1}$
at $s=0$ and $\sigma=\sigma_{2}$ at $s=L$. From part (i) of Proposition
\ref{prop:gen-2nd-var} the curvature $\kappa$ is constant, thus
the only possibilities for $M$ are line segments and circular arcs.
The orientation of $M$ is selected so that $\kappa\geqslant0$. Further,
$|B_{M}|=\kappa=1/R$, where $R$ is the radius of the arc or $\infty$
for line segments.

For equation (\ref{eq:eigv-PDE}) we have the following three types
of solution, depending on the sign of $\mu+|B_{M}|^{2}$:
\[
\begin{aligned}(I) & \quad f(s)=-\frac{\lambda}{2k^{2}}+C\sin(ks)+D\cos(ks), & k^{2}=\mu+\kappa^{2}\Leftrightarrow\mu>-\kappa^{2}\\
(II) & \quad f(s)=\frac{\lambda}{2k^{2}}+Ce^{ks}+De^{-ks}, & k^{2}=-(\mu+\kappa^{2})\Leftrightarrow\mu<-\kappa^{2}\\
(III) & \quad f(s)=-\frac{\lambda}{4}s^{2}+Cs+D & \mu=-\kappa^{2}
\end{aligned}
\]
The constants $\lambda$, $C$, $D$ in each case are determined by
the two conditions of (\ref{eq:BC-2d}) and one of (\ref{eq:vol-const}),
which form a linear homogeneous system of three equations in these
three variables. The condition for existence of solutions of this
system is obtained by setting its determinant to 0, which gives a
nonlinear equation for $k$. With a solution for $k$ at hand, we
can determine the eigenvalue $\mu$ by the last column in the above
table of possible solutions for $f$ and the eivenvectors $(\lambda,C,D)$,
each determining an eigenfunction $f$ of problem (\ref{eq:eigv-PDE}).
Not all three cases (I)-(III) need to be considered, depending on
the problem under study.

\subsection{Case I: $-\kappa^{2}<\mu<0\Leftrightarrow0<\frac{k}{\kappa}<1$}

By (\ref{eq:BC-2d}) $s=0,L$ and (\ref{eq:vol-const}) we obtain
the system
\begin{equation}
\begin{aligned} & -\frac{\lambda}{2k^{2}}+\frac{k}{\sigma_{1}}C+D=0\\
 & -\frac{\lambda}{2k^{2}}+\left[\sin(kL)-\frac{k}{\sigma_{2}}\cos(kL)\right]C+\left[\cos(kL)+\frac{k}{\sigma_{2}}\sin(kL)\right]D=0\\
 & -\frac{\lambda L}{2k}+\left[1-\cos(kL)\right]C+\sin(kL)\, D=0
\end{aligned}
\label{eq:case-I}
\end{equation}
The case of stable and neutrally stable eigenvalues $\mu\geqslant0$
is contained here.

\subsection{Case II: $\mu<-\kappa^{2},\; k>0$}

By (\ref{eq:BC-2d}) $s=0,L$ and (\ref{eq:vol-const}) we obtain
the system
\begin{equation}
\begin{aligned} & \frac{\lambda}{2k^{2}}+\left(1+\frac{k}{\sigma_{1}}\right)C+\left(1-\frac{k}{\sigma_{1}}\right)D=0\\
 & \frac{\lambda}{2k^{2}}+\left(1-\frac{k}{\sigma_{2}}\right)e^{kL}C+\left(1+\frac{k}{\sigma_{2}}\right)e^{-kL}D=0\\
 & \frac{\lambda L}{2k}+\left(e^{kL}-1\right)C-\left(e^{-kL}-1\right)D=0
\end{aligned}
\label{eq:case-II}
\end{equation}

\subsection{Case III: $\mu=-\kappa^{2}$}

As previously we obtain the system
\begin{equation}
\begin{aligned} & C+\sigma_{1}D=0\\
 & \tfrac{1}{4}L\left(2-\sigma_{2}L\right)\lambda+\left(\sigma_{2}L-1\right)C+\sigma_{2}D=0\\
 & -\tfrac{1}{6}L^{2}\lambda+LC+2D=0
\end{aligned}
\label{eq:case-III}
\end{equation}

By the first of equations (\ref{eq:case-III}), $C=-\sigma_{1}D$,
and the remaining two equations give a system, the solvability of
which is equivalent to the equation
\begin{equation}
\sigma_{1}\sigma_{2}L^{2}-4(\sigma_{1}+\sigma_{2})L+12=0.\label{eq:IIIa}
\end{equation}
As a consequence, when it happens that the length of the interface
has one of the following two values
\[
L=2\frac{\sigma_{1}+\sigma_{2}\pm\sqrt{\sigma_{1}^{2}+\sigma_{2}^{2}-\sigma_{1}\sigma_{2}}}{\sigma_{1}\sigma_{2}}
\]
the partitioning is unstable. We will prove a more general result
in Proposition \ref{prop:crit-1}.

\subsection{Stability and instability criteria}

The following proposition generalizes the previous result.
\begin{prop}
\label{prop:crit-1}Let $\Omega$ be a bounded, convex, open subset
of $\mathbb{R}^{2}$ and $M$ a minimal two phase partitioning of
$\Omega$ with length $L$. Assume there is a neighborhood of the
points $\partial M$ in $\Sigma=\partial\Omega$ which is a $C^{2}$
curve and the curvatures of $\Sigma$ at these points are $\sigma_{1}$,
$\sigma_{2}$. If $L$ satisfies the condition
\begin{equation}
2\frac{\sigma_{1}+\sigma_{2}-\sqrt{\sigma_{1}^{2}+\sigma_{2}^{2}-\sigma_{1}\sigma_{2}}}{\sigma_{1}\sigma_{2}}\leqslant L\leqslant2\frac{\sigma_{1}+\sigma_{2}+\sqrt{\sigma_{1}^{2}+\sigma_{2}^{2}-\sigma_{1}\sigma_{2}}}{\sigma_{1}\sigma_{2}}\label{eq:crit-1}
\end{equation}
then $M$ is unstable.\end{prop}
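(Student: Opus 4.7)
The plan is to apply Proposition \ref{prop:Categ-b} contrapositively: since $J(f)\geqslant\mu_{1}$ on admissible variations, in order to prove that $M$ is unstable it suffices to exhibit a nonzero admissible $f\in H^{1}(M)$ (i.e.\ with $\int_{M}f=0$) for which $J(f)\leqslant0$, for this forces $\mu_{1}\leqslant0$ and precludes stability. The natural family of test functions is the two-parameter family of quadratics $f(s)=-\tfrac{\lambda}{4}s^{2}+Cs+D$ already encountered in Case III, with $\lambda$ now fixed by the volume constraint $\int_{0}^{L}f=0$, namely $\lambda=\tfrac{6C}{L}+\tfrac{12D}{L^{2}}$, so that $(C,D)$ parametrizes an admissible two-dimensional subspace.

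On this family $f''=-\lambda/2$ is constant, and the volume constraint yields, via integration by parts, the simplification $\int_{0}^{L}(f')^{2}=f(L)f'(L)-f(0)f'(0)$. Substituting into (\ref{eq:J}) reduces $J$ to boundary data plus the bulk $-\kappa^{2}$ term,
$$J(f)=f(L)f'(L)-f(0)f'(0)-\sigma_{1}f(0)^{2}-\sigma_{2}f(L)^{2}-\kappa^{2}\int_{0}^{L}f^{2}.$$
Replacing $f(0),f'(0),f(L),f'(L)$ by their linear expressions in $(C,D)$ turns $J(f)$ into $Q_{0}(C,D)-\kappa^{2}Q_{1}(C,D)$, where $Q_{1}$ is positive definite. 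The essential algebraic step is a direct computation showing that the discriminant of the quadratic form $Q_{0}$ equals (up to a positive factor) $-P(L)$, with $P(L)=\sigma_{1}\sigma_{2}L^{2}-4(\sigma_{1}+\sigma_{2})L+12$ the polynomial of (\ref{eq:IIIa}).

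Granting this identity, the conclusion is immediate: since $L_{\pm}$ are the roots of $P$ and $P$ opens upward, $P(L)\leqslant0$ precisely for $L\in[L_{-},L_{+}]$. In the interior of this interval $Q_{0}$ is indefinite, so $Q_{0}(C,D)<0$ for some nonzero $(C,D)$; at either endpoint $Q_{0}$ is semidefinite and vanishes on a one-dimensional isotropic line. Either way there is a nonzero $(C,D)$ with $Q_{0}(C,D)\leqslant0$, hence a nonzero admissible $f$ with $J(f)\leqslant0$; when $\kappa>0$ the extra term $-\kappa^{2}Q_{1}(C,D)$ even yields strict negativity at the endpoints. Proposition \ref{prop:Categ-b} then gives $\mu_{1}\leqslant0$, and $M$ is unstable. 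The main obstacle is purely computational, namely the identification of the discriminant of $Q_{0}$ with $-P(L)$, which is direct but requires careful bookkeeping of the boundary contributions.
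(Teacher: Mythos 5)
Your argument is correct, and the computation you flag as the essential step does check out: with $f(s)=-\tfrac{\lambda}{4}s^{2}+Cs+D$ and $\lambda=\tfrac{6C}{L}+\tfrac{12D}{L^{2}}$ one gets $f(0)=D$, $f'(0)=C$, $f(L)=-\tfrac{CL}{2}-2D$, $f'(L)=-2C-\tfrac{6D}{L}$, and the boundary form is
\[
Q_{0}(C,D)=\Bigl(L-\tfrac{\sigma_{2}L^{2}}{4}\Bigr)C^{2}+\bigl(6-2\sigma_{2}L\bigr)CD+\Bigl(\tfrac{12}{L}-\sigma_{1}-4\sigma_{2}\Bigr)D^{2},
\]
whose discriminant is exactly $-\bigl(\sigma_{1}\sigma_{2}L^{2}-4(\sigma_{1}+\sigma_{2})L+12\bigr)$, as you claim. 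This is, however, a genuinely different route from the paper's. The paper stays inside the spectral framework: it works in Case (II) ($\mu<-\kappa^{2}$, hyperbolic profiles), forms the $3\times3$ characteristic determinant $D(x)$ of system (\ref{eq:case-II}), shows $D(x)\to-\infty$ as $x\to\infty$, and computes the fourth-order Taylor coefficient at $x=0$ (via an induction formula for the derivatives of $p\cosh x+q\sinh x$) to get $D(x)=-\tfrac{1}{6ab}(ab-4a-4b+12)x^{4}+O(x^{5})$; the intermediate value theorem then produces a root $x>0$, hence an eigenvalue $\mu=-\kappa^{2}-k^{2}<0$, and instability follows from Proposition \ref{prop:Categ-a}. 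Your proof instead uses the Case (III) quadratic profile as a bare test function and never touches the eigenvalue problem: it is more elementary, yields an explicit destabilizing variation, and explains structurally why the polynomial of (\ref{eq:IIIa}) governs the entire interval (it is the discriminant of the boundary quadratic form). What the paper's argument buys in exchange is a quantitative statement --- an actual eigenvalue below $-\kappa^{2}$ on the open interval --- whereas yours only gives $J(f)<0$ there without locating the spectrum. Two minor remarks: the detour through $\mu_{1}$ and Proposition \ref{prop:Categ-b} is superfluous, since a nonzero admissible $f$ with $J(f)\leqslant0$ already negates the paper's definition of stability directly; and at the endpoints of (\ref{eq:crit-1}) with $\kappa=0$ your construction, like the paper's Case (III), only produces $J(f)=0$, so ``unstable'' there must be read as failure of strict positivity --- the same convention the paper adopts.
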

\begin{proof}
We only have to prove the inequalities. Letting $B=\frac{\lambda}{2k^{2}}$,
$x=Lk$, $a=\sigma_{1}L$ and $b=\sigma_{2}L$, the solvability condition
for system (\ref{eq:case-II}) of case (II) (in the variables $B$,
$C$, $D$) is
\[
D(x)=\left|\begin{array}{ccc}
1 & 1+\frac{x}{a} & 1-\frac{x}{a}\\
1 & \left(1-\frac{x}{b}\right)e^{x} & \left(1+\frac{x}{b}\right)e^{-x}\\
x & e^{x}-1 & 1-e^{-x}
\end{array}\right|=0
\]
Performing operations we obtain
\[
D(x)=4\left[1+\frac{1}{2}\left(\frac{1}{a}+\frac{1}{b}\right)x^{2}\right]\cosh x-2\left(1+\frac{1}{a}+\frac{1}{b}+\frac{x^{2}}{ab}\right)x\sinh x-4.
\]
For $x\to+\infty$ it is easily established that $D(x)\to-\infty$.
At $x=0$ we have $D(0)=0$, so we expand $D$ in a power series of
$x$ about 0. To this purpose we consider the function
\[
f(x)=p(x)\cosh x+q(x)\sinh x
\]
and prove by induction on $n$ that
\begin{equation}
f^{(n)}(x)=\cosh x\sum_{i=0}^{n}\left(\begin{array}{c}
n\\
i
\end{array}\right)p_{\rho_{n-i}}^{(i)}+\sinh x\sum_{i=0}^{n}\left(\begin{array}{c}
n\\
i
\end{array}\right)q_{\rho_{n-i}}^{(i)}\label{eq:f(n)}
\end{equation}
where $\rho_{m}=m\mod2$ and $p_{0}=p$, $q_{0}=q$, $p_{1}=q$, $q_{1}=p$.
Assuming the validity of (\ref{eq:f(n)}) for $n$ we will prove its
validity for $n+1$. Clearly all $f^{(i)}$ have the same form with
$f$. The coefficient of $\cosh x$ in $f^{(n+1)}$ is a sum of numerical
multiples of the derivatives $p^{(n+1)},\cdots,p^{(0)}=p$ and $q^{(n+1)},\cdots,q^{(0)}=q$
and it is obtained by taking the derivative of $f^{(n)}$ in (\ref{eq:f(n)}).
We have
\[
\begin{gathered}\sum_{i=0}^{n}\left(\begin{array}{c}
n\\
i
\end{array}\right)p_{\rho_{n-i}}^{(i+1)}+\sum_{i=0}^{n}\left(\begin{array}{c}
n\\
i
\end{array}\right)q_{\rho_{n-i}}^{(i)}=\\
p_{0}^{(n+1)}+\sum_{i=0}^{n-1}\left(\begin{array}{c}
n\\
i
\end{array}\right)p_{\rho_{n-i}}^{(i+1)}+\sum_{i=1}^{n}\left(\begin{array}{c}
n\\
i
\end{array}\right)q_{\rho_{n-i}}^{(i)}+q_{\rho_{n}}^{(0)}=\\
p_{0}^{(n+1)}+\sum_{i=0}^{n-1}\left(\begin{array}{c}
n\\
i
\end{array}\right)p_{\rho_{n-i}}^{(i+1)}+\sum_{j=0}^{n-1}\left(\begin{array}{c}
n\\
j+1
\end{array}\right)p_{\rho_{n-j}}^{(j+1)}+p_{\rho_{n+1}}^{(0)}=\\
p_{0}^{(n+1)}+\sum_{i=0}^{n-1}\left[\left(\begin{array}{c}
n\\
i
\end{array}\right)+\left(\begin{array}{c}
n\\
i+1
\end{array}\right)\right]p_{\rho_{n-i}}^{(i+1)}+p_{\rho_{n+1}}^{(0)}
\end{gathered}
\]
On the second equality we used the change of summation index $j=i-1$
and the identity $q_{\rho_{n-j-1}}^{(j+1)}=p_{\rho_{n-j}}^{(j+1)}$.
Application of the binomial theorem and a second redefinition of the
sumation index by $j=i+1$, prove the assertion.

By (\ref{eq:f(n)}) with $p(x)=4\left[1+\frac{1}{2}\left(\frac{1}{a}+\frac{1}{b}\right)x^{2}\right]$,
$q(x)=-2\left(1+\frac{1}{a}+\frac{1}{b}+\frac{x^{2}}{ab}\right)x$
we obtain $f^{\prime}(0)=f^{\prime\prime}(0)=f^{\prime\prime\prime}(0)=0$
and
\[
f^{(4)}(0)=-\frac{4}{ab}(ab-4a-4b+12).
\]
The expansion of $D$ is
\[
D(x)=-\frac{1}{6ab}(ab-4a-4b+12)x^{4}+O(x^{5}).
\]
Thus $D(x)>0$ in a neighborhood $]0,\delta[$, $\delta>0$, when
$ab-4a-4b+12<0$. Using the definitions of $a$, $b$ in this inequality
we obtain
\[
\sigma_{1}\sigma_{2}L^{2}-4(\sigma_{1}+\sigma_{2})L+12<0
\]
the solution of which is (\ref{eq:crit-1}). By the continuity of
$D$ and the intermediate value theorem of calculus we conclude the
existence of a positive root of $D(x)=0$, which completes the proof.
\end{proof}
For $\sigma_{1}=\sigma_{2}=\sigma$, by (\ref{eq:crit-1}) we have
instability when $\frac{2}{\sigma}\leqslant L\leqslant\frac{6}{\sigma}$
and $L\to\infty$ as $\sigma\to0$, which suggests that for flat boundaries
all minimal partitionings are stable. This is conveniently proved
by solving systems (\ref{eq:case-I}), (\ref{eq:case-II}) and (\ref{eq:case-III}):
\begin{prop}
Let $\Omega$ and $M$ satisfy the conditions of Proposition \ref{prop:crit-1}
and $\sigma_{1}=\sigma_{2}=0$. Then $M$ is stable.\end{prop}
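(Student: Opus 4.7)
The plan is to invoke Proposition \ref{prop:Categ-b}: it suffices to show that the smallest eigenvalue $\mu_1$ of problem (\ref{eq:eigv-PDE})--(\ref{eq:eigv-BC}) is (strictly) positive. With $\sigma_1=\sigma_2=0$, the boundary condition (\ref{eq:BC-2d}) degenerates to the pure Neumann condition $f'(0)=f'(L)=0$, and I will walk through each of the three ODE cases (I), (II), (III) already catalogued above, specializing the systems (\ref{eq:case-I})--(\ref{eq:case-III}) to $\sigma_1=\sigma_2=0$.

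In Case (III) ($\mu=-\kappa^{2}$), substituting $\sigma_1=\sigma_2=0$ into (\ref{eq:case-III}) forces $C=0$ from the first row, then $\lambda=0$ from the second (which reduces to $\tfrac{L}{2}\lambda - C=0$), and finally $D=0$ from the third; no eigenfunction exists. In Case (II) ($\mu<-\kappa^{2}$), the Neumann condition at $s=0$ applied to $f(s)=\lambda/(2k^{2})+Ce^{ks}+De^{-ks}$ gives $C=D$, while the condition at $s=L$ gives $Ce^{kL}=De^{-kL}$; since $k>0$ these force $C=D=0$, and then (\ref{eq:vol-const}) forces $\lambda=0$. Thus Cases (II) and (III) contribute nothing.

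The content is therefore in Case (I), where $k^{2}=\mu+\kappa^{2}>0$. The condition $f'(0)=0$ immediately gives $C=0$, and $f'(L)=0$ then collapses to $D\sin(kL)=0$. Discarding the trivial branch $D=0$, we obtain the quantization $kL=n\pi$ for $n\in\mathbb{N}$, $n\geq 1$; the volume constraint (\ref{eq:vol-const}) kills $\lambda$. This yields the eigenpairs
\[
f_n(s)=\cos\!\left(\tfrac{n\pi s}{L}\right),\qquad \mu_n=\left(\tfrac{n\pi}{L}\right)^{2}-\kappa^{2},\qquad n=1,2,\dots,
\]
so the smallest eigenvalue is $\mu_1=\pi^{2}/L^{2}-\kappa^{2}$.

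It remains to verify $\mu_1>0$, i.e.\ $\kappa L<\pi$. For a line segment ($\kappa=0$) the inequality is vacuous. For a circular arc of radius $R=1/\kappa$, $\kappa L$ is precisely the angle subtended at the center, and perpendicular contact at endpoints sitting on flat pieces of $\partial\Omega$ means the radii at $p_1,p_2$ are parallel to the tangent lines of $\Sigma$. I expect this geometric step to be the main obstacle: the idea is that $\kappa L\geq\pi$ together with convexity of $\Omega$ and minimality of $M$ would force $\Omega$ to contain a full disk through $p_1,p_2$ and then allow an area-decreasing, volume-preserving competitor; in the borderline semicircular case the extremal eigenfunction $\cos(\pi s/L)$ is, exactly as in the spherical example of Section \ref{sec:Apps-RN}, induced by a rigid translation along the flat boundary rather than by a genuine variation, so it may be discarded and the next eigenvalue $\mu_2=4\pi^{2}/L^{2}-\kappa^{2}=3\kappa^{2}>0$ controls stability. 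Either way, Proposition \ref{prop:Categ-b} then delivers the claim.
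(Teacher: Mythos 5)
Your ODE case analysis is correct and coincides with the paper's: with $\sigma_{1}=\sigma_{2}=0$ cases (II) and (III) admit only the trivial solution, case (I) yields $\lambda=C=0$, $D\sin(kL)=0$, hence the spectrum $\mu_{n}=(n\pi/L)^{2}-\kappa^{2}$. The genuine gap is exactly the step you flag yourself and then leave as a sketch: the inequality $\kappa L<\pi$. Without it the conclusion $\mu_{1}>0$ does not follow, and your proposed routes (a volume-preserving, area-decreasing competitor forced by $\kappa L\geqslant\pi$; or discarding the borderline mode $\cos(\pi s/L)$ as a translation) are neither carried out nor needed. The paper closes this step with a one-line geometric observation: since $M$ meets $\Sigma$ at right angles, the tangent line to $\Sigma$ at each endpoint of the arc contains the corresponding radius and therefore passes through the arc's center, so $\omega=\kappa L$ is the angle of the circular sector cut out by these two tangent lines; as they are supporting lines of the convex set $\Omega$ containing $M$, convexity forces $\omega<\pi$. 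From $\kappa L<\pi$ one gets $\mu_{n}=(n\pi/L)^{2}-\kappa^{2}>0$ for all $n\geqslant1$ (equivalently, a nonpositive eigenvalue would require $kL=n\pi$ with $k\leqslant\kappa$, i.e.\ $\kappa L\geqslant\pi$), and Proposition \ref{prop:Categ-b} gives stability. Note in particular that the paper never has to invoke the ``discard the translation mode'' device here, because the strict inequality $\omega<\pi$ already excludes the neutral semicircular configuration you were worried about; if you want to retain your competitor-based argument you would have to actually construct the competitor, which is considerably more work than the tangent-line argument.
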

\begin{proof}
When $\sigma_{1}=\sigma_{2}=0$, by (\ref{eq:IIIa}) it is clear that
(\ref{eq:case-III}) has no solution and it is easily checked that
(\ref{eq:case-II}) has only the trivial solution. System (\ref{eq:case-I})
reduces to
\[
\lambda=C=0,\;\sin(kL)\, D=0
\]
which has nontrivial solutions only when $\sin(kL)=0$, i.e. $kL=n\pi$,
$n\in\mathbb{N}$. By the restriction $0<\frac{k}{\kappa}<1$ for
case (I) it follows that $\kappa L>n\pi$. As $\omega=\kappa L$ is
the angle of the circular sector defined by $M$ and the tangents
to $\partial\Omega$ at the extremities of $M$, by the convexity
of $\Omega$ we have $\omega<\pi$. The case $k=\kappa$ correspomding
to $\mu=0$ gives also no eigenvalues. Thus we have only positive
eigenvalues by case (I), which are given by $k_{n}=\frac{n\pi}{L}$
or 
\[
\mu_{n}=\frac{n^{2}\pi^{2}}{L^{2}}-\kappa^{2}>0
\]
and this according to Proposition \ref{prop:Categ-b} implies stability.
\end{proof}
As a next application of equations (\ref{eq:case-I})-(\ref{eq:case-III})
we give a proof of the stability of circles. This is essentially a
variational proof of the well-known fact that among all 2-dimensional
geometric shapes of equal area, circles have least perimeter.
\begin{prop}
Let $\Omega\subset\mathbb{R}^{2}$ be an open set and $\Omega_{1}=B(x_{0},R)$
such that $\overline{\Omega}_{1}\subset\Omega$. Then $M=\partial\Omega_{1}$
is a stable two phase partitioning of $\Omega$.\end{prop}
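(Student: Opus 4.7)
The plan is to apply Proposition \ref{prop:Categ-b} after an explicit computation of the spectrum of the Jacobi operator on the circle $M = \partial B(x_0, R)$. The decisive simplification is that $\overline{\Omega}_1 \subset \Omega$ forces $\partial M = \emptyset$, so the boundary integrals in $J$ and the boundary condition (\ref{eq:eigv-BC}) drop out entirely, leaving a purely periodic problem. Parametrizing $M$ by arc length $s \in [0, 2\pi R]$ and using $|B_M|^2 = \kappa^2 = 1/R^2$, equation (\ref{eq:eigv-PDE}) becomes the ODE
\[
f''(s) + \left(\mu + \tfrac{1}{R^2}\right) f(s) = -\tfrac{\lambda}{2}, \qquad s \in [0, 2\pi R],
\]
with $f$ periodic.

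Integrating this equation over one period and invoking the volume constraint $\int_M f = 0$ forces $\lambda = 0$, reducing matters to the classical periodic eigenvalue problem $-f'' = (\mu + 1/R^2) f$ on the circle. Its spectrum is well known: eigenvalues $n^2/R^2$ with eigenfunctions $\cos(ns/R)$, $\sin(ns/R)$ for $n = 0, 1, 2, \dots$, giving $\mu_n = (n^2 - 1)/R^2$. The constant mode $n=0$ is killed by the volume constraint, so the first candidate is $n = 1$.

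The main obstacle, exactly parallel to the $\mathbb{S}^{N-1}$ discussion in the preceding subsection, is the $n=1$ eigenfunctions $\cos(s/R)$ and $\sin(s/R)$, which yield $\mu = 0$ and would naively imply only neutral stability. I would verify directly that these are precisely the normal components on $M$ of the two independent rigid translations of the ball along the coordinate axes: if $v \in \mathbb{R}^2$ is a unit translation vector then $v \cdot N(s) = \cos(s/R - \theta_0)$ for an appropriate phase. Since translations of a ball strictly interior to $\Omega$ preserve both the shape of $M$ and the volumes of the two phases, they are not genuine partitioning variations and must be discarded, just as was done for the sphere in Section \ref{sec:Apps-RN}.

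Once these translation modes are removed, the smallest available eigenvalue is $\mu = 3/R^2 > 0$, achieved at $n = 2$ by the eigenfunctions $\cos(2s/R)$, $\sin(2s/R)$. Proposition \ref{prop:Categ-b} then yields $J(f) \geqslant 3/R^2 > 0$ for every admissible $f$, and hence $M$ is stable.
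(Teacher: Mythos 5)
Your proposal is correct and follows essentially the same route as the paper: integrate the eigenvalue equation to get $\lambda=0$, reduce to the periodic eigenvalue problem for $f''+(\mu+\kappa^{2})f=0$ on the circle, identify the $\mu=0$ modes $\cos(s/R)$, $\sin(s/R)$ as rigid translations to be discarded, and conclude stability from the positivity of the remaining eigenvalues via Proposition \ref{prop:Categ-b}. The only cosmetic difference is that the paper runs the argument through the case (I)--(III) determinant systems of Section \ref{sec:Apps-2D} with a periodicity condition, whereas you invoke the Fourier spectrum directly; the substance, including the slightly informal step of discarding translation modes, is identical.
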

\begin{proof}
Integration of (\ref{eq:eigv-PDE}) in view of (\ref{eq:vol-const})
gives $\lambda=0$. From systems (\ref{eq:case-I})-(\ref{eq:case-III})
only the third equation remains in place. Additionally we have the
periodicity condition $f(0)=f(\frac{2\pi}{\kappa})$. It is easily
checked that cases (II) and (III) have only trivial solutions, while
for (I) we have
\[
\begin{aligned}C\sin\frac{2\pi k}{\kappa}+D\left(\cos\frac{2\pi k}{\kappa}-1\right)=0\\
\left(1-\cos\frac{2\pi k}{\kappa}\right)C+D\sin\frac{2\pi k}{\kappa}=0
\end{aligned}
\]
which has nontrivial solutions if and only if $\cos\frac{2\pi k}{\kappa}=1$,
i.e. $k=\kappa$ which corresponds to neutral stability as in this
case $\mu=0$. The eigenvectors are
\[
f(s)=C\sin(\kappa s)+D\cos(\kappa s)
\]
i.e. linear combinations of $f_{1}(s)=\sin(\kappa s)$ and $f_{2}(s)=\cos(\kappa s)$
expressing translations along the $y$ and $x$ axis respectively.
Since they are essentially not true variations of $M$, we can discard
them and thus there are only positive eigenvalues, which proves the
assertion.
\end{proof}
Partitionings with large interfaces are unstable. More precisely:
\begin{prop}
\label{prop:crit-2}Let $\Omega$ and $M$ satisfy the conditions
of Proposition \ref{prop:crit-1}. If $L$ satisfies the condition
\begin{equation}
L>2\frac{\sigma_{1}+\sigma_{2}+\sqrt{\sigma_{1}^{2}+\sigma_{2}^{2}-\sigma_{1}\sigma_{2}}}{\sigma_{1}\sigma_{2}}\label{eq:crit-2}
\end{equation}
then $M$ is unstable.\end{prop}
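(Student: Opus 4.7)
The plan is to bypass cases (I)--(III) entirely and prove instability directly from the variational characterization of Proposition \ref{prop:Categ-b}: it suffices to exhibit a single admissible $f \in H^{1}(M)$ satisfying the volume constraint $\int_{M} f = 0$ with $J(f) < 0$, for then the principal eigenvalue $\mu_{1}$ must be negative. The natural candidate, suggested by the quadratic ansatz underlying case (III), is the simplest mean-zero, non-constant function on the arc, namely
\[
f(s) = s - \tfrac{L}{2},
\]
where $s$ denotes arc length on $M$. Admissibility $\int_{M} f \, ds = 0$ is immediate.

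A direct calculation using $|\nabla^{M} f|^{2} = (f')^{2} = 1$, $|B_{M}|^{2} = \kappa^{2}$, $\int_{0}^{L} f^{2} \, ds = L^{3}/12$, and $f(0)^{2} = f(L)^{2} = L^{2}/4$ gives
\[
J(f) \;=\; L - \frac{\kappa^{2} L^{3}}{12} - \frac{(\sigma_{1}+\sigma_{2}) L^{2}}{4} \;\leqslant\; L\left(1 - \frac{(\sigma_{1}+\sigma_{2}) L}{4}\right),
\]
where the inequality simply drops the non-positive curvature term. Consequently $J(f) < 0$ as soon as $L > 4/(\sigma_{1}+\sigma_{2})$, and in that case Proposition \ref{prop:Categ-b} yields $\mu_{1} \leqslant J(f)/|f|_{L^{2}(M)}^{2} < 0$, whence $M$ is unstable.

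The only remaining step is an algebraic comparison between the lower bound $L_{+}$ on the right-hand side of (\ref{eq:crit-2}) and the threshold $4/(\sigma_{1}+\sigma_{2})$. After clearing denominators, $L_{+} > 4/(\sigma_{1}+\sigma_{2})$ is equivalent to
\[
\sigma_{1}^{2} + \sigma_{2}^{2} + (\sigma_{1}+\sigma_{2})\sqrt{\sigma_{1}^{2}+\sigma_{2}^{2}-\sigma_{1}\sigma_{2}} \;>\; 0,
\]
which is immediate since $\sigma_{1}, \sigma_{2} > 0$ and the radicand $\sigma_{1}^{2}+\sigma_{2}^{2}-\sigma_{1}\sigma_{2} = (\sigma_{1}-\tfrac{1}{2}\sigma_{2})^{2} + \tfrac{3}{4}\sigma_{2}^{2} \geqslant 0$. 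The only \emph{obstacle} I anticipate is therefore purely algebraic, and it is dispatched in a single line; no delicate spectral or determinantal analysis of the type used in Proposition \ref{prop:crit-1} is needed in this regime, because one explicit trial function already suffices.
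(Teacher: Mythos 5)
Your proof is correct, and it takes a genuinely different and more elementary route than the paper's. The paper proves Proposition \ref{prop:crit-2} by locating an actual negative eigenvalue in the regime of case (II): it integrates (\ref{eq:eigv-PDE}) against the constraints, exploits the symmetry $f^{\star}(s)=f(L-s)$ to pin down $f(0)=f(L)$, reduces the solvability of (\ref{eq:case-II}) to the transcendental equation $\tfrac{x}{2}\coth\tfrac{x}{2}=1+\tfrac{x^{2}}{(\sigma_{1}+\sigma_{2})L}$, and shows by a Taylor expansion and the intermediate value theorem that a positive root exists once $(\sigma_{1}+\sigma_{2})L>12$, which follows from (\ref{eq:crit-2}) via $\sigma_{1}\sigma_{2}\leqslant\tfrac14(\sigma_{1}+\sigma_{2})^{2}$. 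You instead bypass the spectral problem entirely and test $J$ on $f(s)=s-\tfrac{L}{2}$; your computation $J(f)=L-\tfrac{\kappa^{2}L^{3}}{12}-\tfrac{(\sigma_{1}+\sigma_{2})L^{2}}{4}$ is correct, as is the algebraic verification that $L_{+}>4/(\sigma_{1}+\sigma_{2})$, so $J(f)<0$ under (\ref{eq:crit-2}) and instability follows (this use of a test function is exactly how the paper itself argues instability of disconnected partitions, so admissibility of a mean-zero $f$ not vanishing on $\partial M$ is consistent with its framework; strictly one invokes the definition of instability or Proposition \ref{prop:Categ-b} as you do, rather than Proposition \ref{prop:Categ-a}). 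What each approach buys: the paper's determinantal analysis identifies where in the spectrum the unstable mode sits ($\mu<-\kappa^{2}$) and is of a piece with the machinery of cases (I)--(III) used throughout Section \ref{sec:Apps-2D}; your one-line test function actually proves a \emph{stronger} statement, namely instability for all $L>4/(\sigma_{1}+\sigma_{2})$, a threshold strictly below $L_{+}$ (and equal to $L_{-}$ when $\sigma_{1}=\sigma_{2}$, in which case it subsumes most of Proposition \ref{prop:crit-1} as well). It would be worth noting explicitly in your write-up that the threshold you obtain improves on (\ref{eq:crit-2}).
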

\begin{proof}
Integration of (\ref{eq:eigv-PDE}), taking into account (\ref{eq:eigv-BC})
and (\ref{eq:vol-const}), gives
\[
\int_{\partial M}\sigma f+\int_{M}|B_{M}|^{2}f=-\tfrac{1}{2}\lambda|M|,
\]
which in two dimensions simplifies to the equation
\begin{equation}
\sigma_{1}f(0)+\sigma_{2}f(L)=-\tfrac{1}{2}\lambda L\label{eq:crit-2-eq-1}
\end{equation}
By remarking that when $f$ is an eigenfunction of (\ref{eq:eigv-PDE})
then also $f^{\star}$ defined by $f^{\star}(s)=f(L-s)$ is an eigenfunction
for the same eigenvalue, we obtain the equation
\begin{equation}
\sigma_{2}f(0)+\sigma_{1}f(L)=-\tfrac{1}{2}\lambda L\label{eq:crit-2-eq-2}
\end{equation}
The solution of the system (\ref{eq:crit-2-eq-1}), (\ref{eq:crit-2-eq-2})
is
\[
f(0)=f(L)=-\frac{\lambda L}{2(\sigma_{1}+\sigma_{2})}
\]
By (II) we have $f(0)=\frac{\lambda}{2k^{2}}+C+D$ and $f(L)=\frac{\lambda}{2k^{2}}+Ce^{kL}+De^{-kL}$,
from which it follows that
\[
C=-\frac{\lambda}{2}\frac{1-e^{-kL}}{e^{kL}-e^{-kL}}\left(\frac{1}{k^{2}}+\frac{L}{\sigma_{1}+\sigma_{2}}\right),\; D=-\frac{\lambda}{2}\frac{e^{kL}-1}{e^{kL}-e^{-kL}}\left(\frac{1}{k^{2}}+\frac{L}{\sigma_{1}+\sigma_{2}}\right).
\]
By the volume conservation equation as expressed by the third of (\ref{eq:case-II})
we obtain
\[
\frac{kL}{2}\coth\frac{kL}{2}=1+\frac{(kL)^{2}}{(\sigma_{1}+\sigma_{2})L}
\]
By considering the function
\[
f(x)=\frac{x}{2}\frac{e^{x}+1}{e^{x}-1}-1-\frac{x^{2}}{c}
\]
where $x=kL$ and $c=(\sigma_{1}+\sigma_{2})L$, with Taylor series
expansion $f(x)=\left(\frac{1}{12}-\frac{1}{c}\right)x^{2}+\mathcal{O}(x^{4})$
about 0, and remarking that $f(x)>0$ in a neighborhood $]0,\delta[$
if $c>12$, while $f(x)$ becomes negative for $x$ sufficiently large,
we conclude the existence of a positive root of $f(x)=0$. When $L$
satisfies the condition (\ref{eq:crit-2}), from $\sigma_{1}\sigma_{2}\leqslant\frac{1}{4}(\sigma_{1}+\sigma_{2})^{2}$
we obtain $(\sigma_{1}+\sigma_{2})L>12$, and this completes the proof.
\end{proof}
We conclude by proving that sufficiently small partitions are stable.
\begin{prop}
\label{prop:no-nec-and-suf-cond}Let $\Omega$ and $M$ be as in Proposition
\ref{prop:crit-1}. If $L=|M|$ is sufficiently small, then $M$ is
stable.\end{prop}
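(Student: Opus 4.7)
The plan is to apply Proposition \ref{prop:Categ-b} directly: it suffices to show that for $L$ sufficiently small one has $J(f) > 0$ for every $f \in H^{1}(M)$ satisfying the constraints $\int_{M} f = 0$ and $\int_{M} f^{2} = 1$. Parametrising $M$ by arclength $s \in [0,L]$, we have $|\nabla^{M} f|^{2} = (f')^{2}$, $|B_{M}|^{2} = \kappa^{2}$ is the constant interface curvature from Proposition \ref{prop:gen-2nd-var}(i), and the boundary integral collapses to $\sigma_{1} f(0)^{2} + \sigma_{2} f(L)^{2}$.

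The argument rests on two one-dimensional estimates for mean-zero $f$ on $[0,L]$. First, the Poincaré--Wirtinger inequality gives
\[
\int_{0}^{L} (f')^{2} \;\geq\; \frac{\pi^{2}}{L^{2}}\int_{0}^{L} f^{2} \;=\; \frac{\pi^{2}}{L^{2}}.
\]
Second, since $\int_{0}^{L} f = 0$ and $f$ is continuous on $[0,L]$, the intermediate value theorem yields a point $z \in [0,L]$ with $f(z) = 0$; writing $f(y) = \int_{z}^{y} f'$ and applying Cauchy--Schwarz gives the pointwise endpoint bound $|f(y)|^{2} \leq L \int_{0}^{L} (f')^{2}$ for $y = 0,L$. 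Hence
\[
\sigma_{1} f(0)^{2} + \sigma_{2} f(L)^{2} \;\leq\; (\sigma_{1}+\sigma_{2})\,L \int_{0}^{L} (f')^{2},
\]
and therefore
\[
J(f) \;\geq\; [\,1 - (\sigma_{1}+\sigma_{2})L\,]\int_{0}^{L} (f')^{2} - \kappa^{2} \;\geq\; [\,1 - (\sigma_{1}+\sigma_{2})L\,]\frac{\pi^{2}}{L^{2}} - \kappa^{2}.
\]
This lower bound is strictly positive once $L$ is so small that $(\sigma_{1}+\sigma_{2})L < 1/2$ and $\pi^{2}/(2L^{2}) > \kappa^{2}$, which gives the required smallness threshold.

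The only delicate point is the boundary term. A generic trace inequality such as \eqref{eq:BT-std} or \eqref{eq:BT-new} is not enough here, because the corresponding trace constant behaves like $1/L$ as $L \to 0$ and would compete on the same order with the Poincaré gain. The crucial step is to use the volume-preserving constraint $\int_{M} f = 0$ to pin a zero of $f$ inside $M$; this upgrades the endpoint bound from $O(1/L)$ to the much weaker $O(L)$, which is then easily dominated by the $O(1/L^{2})$ supplied by Poincaré--Wirtinger, so that coercivity of $J$ is recovered uniformly in admissible $f$.
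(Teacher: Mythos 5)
There is a genuine gap, and it lies in treating the interface curvature $\kappa$ as a constant independent of $L$. For a minimal arc meeting a convex boundary orthogonally at both endpoints, the opening angle $\omega=\kappa L$ tends to $\pi$ as $L\to 0$ (a short arc in a $C^{2}$ convex domain is forced to be nearly a semicircle: in the unit disk, for instance, $R=1/\kappa\approx L/\pi$ and $\omega=\pi-2L/\pi+O(L^{2})$). Consequently $\kappa^{2}=\omega^{2}/L^{2}$ is \emph{comparable} to the Poincar\'e constant $\pi^{2}/L^{2}$, and your smallness condition $\pi^{2}/(2L^{2})>\kappa^{2}$, i.e.\ $\kappa L<\pi/\sqrt{2}$, fails for every sufficiently small $L$ rather than holding for it. Quantitatively, the Poincar\'e margin $\pi^{2}/L^{2}-\kappa^{2}$ is only $O(1/L)$ (about $4/(\rho L)$ in a disk of radius $\rho$), while your bound for the boundary term, $(\sigma_{1}+\sigma_{2})L\int(f')^{2}$, is also $O(1/L)$ (about $2\pi^{2}/(\rho L)$ on the near-extremal Poincar\'e function), and with these constants the lower bound
\[
\bigl[1-(\sigma_{1}+\sigma_{2})L\bigr]\frac{\pi^{2}}{L^{2}}-\kappa^{2}
\]
is in fact \emph{negative} for small $L$. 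So the estimate does not yield coercivity; it only shows that the competition between the Poincar\'e gain, the curvature term and the trace term is decided at the level of exact constants, which your inequalities are too lossy to capture.

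This is precisely why the paper does not argue by soft functional inequalities here: its proof solves the eigenvalue problem (\ref{eq:eigv-PDE})--(\ref{eq:eigv-BC}) explicitly in one dimension, reduces the existence of a non-positive eigenvalue to the vanishing of the solvability determinants of the systems (\ref{eq:case-I})--(\ref{eq:case-III}), and rules out roots in the relevant range $0<k\leqslant\kappa$ by a Taylor expansion of the determinant, where the delicate cancellations between $\pi^{2}/L^{2}$, $\kappa^{2}$ and the boundary curvatures are tracked exactly. To salvage a variational argument along your lines you would need the sharp form of the one-dimensional inequality, namely that the first nonzero eigenvalue of $-d^{2}/ds^{2}$ on $[0,L]$ under the Robin conditions (\ref{eq:BC-2d}) and the mean-zero constraint exceeds $\omega^{2}/L^{2}$ for $\omega$ close to $\pi$; but establishing that is essentially equivalent to the determinant analysis the paper performs.
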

\begin{proof}
If $L<L_{0}$, where $L_{0}=2\frac{\sigma_{1}+\sigma_{2}-\sqrt{\sigma_{1}^{2}+\sigma_{2}^{2}-\sigma_{1}\sigma_{2}}}{\sigma_{1}\sigma_{2}}$,
case  (III) has no solution. As in the proof of Proposition \ref{prop:crit-2}
we can prove that case (II) has also no solution, and thus there is
no eigenvalue in the range $\mu\leqslant-\kappa^{2}$. We are looking
for eigenvalues in the range $-\kappa^{2}<\mu\leqslant0$ or equivalently
$0<k\leqslant\kappa$. The determinant of the linear system (\ref{eq:case-I})
in the variables $B=-\frac{\lambda}{2k^{2}}$, $C$, $D$ is given
by
\[
\begin{alignedat}{1}D(x) & =\left|\begin{array}{ccc}
1 & \frac{k}{\sigma_{1}} & 1\\
1 & \sin x-\frac{k}{\sigma_{2}}\cos x & \cos x+\frac{k}{\sigma_{2}}\sin x\\
x & 1-\cos x & \sin x
\end{array}\right|\\
 & =2(1-\cos x)-\left(1+\frac{\sigma_{1}+\sigma_{2}}{k}\right)x\sin x+\frac{\sigma_{1}+\sigma_{2}}{k}x^{2}\cos x+\frac{\sigma_{1}\sigma_{2}}{k^{2}}x^{3}\sin x
\end{alignedat}
\]
with $x=kL$. We expand $\sin x$, $\cos x$ into a Taylor series
about $0$:
\[
D(x)=\frac{1}{\sigma_{1}\sigma_{2}}k^{2}x^{2}-\frac{1}{3}k\left(\frac{1}{\sigma_{1}}+\frac{1}{\sigma_{2}}\right)x^{3}+\frac{1}{12}\left(1-2\frac{k^{2}}{\sigma_{1}\sigma_{2}}\right)x^{4}+\mathcal{O}(x^{5}).
\]
Now assume $k$ is a root of $D(kL)=0$ in $]0,\kappa]$ for each
$L\leqslant L_{0}$, i.e. 
\begin{equation}
0=\frac{1}{\sigma_{1}\sigma_{2}}k^{2}-\frac{1}{3}k\left(\frac{1}{\sigma_{1}}+\frac{1}{\sigma_{2}}\right)x+\frac{1}{12}\left(1-2\frac{k^{2}}{\sigma_{1}\sigma_{2}}\right)x^{2}+\mathcal{O}(x^{3}).\label{eq:crit-3-eq-1}
\end{equation}
Since $k$ is bounded, in the limit $L\to0$ we have $x\to0$ and
for any convergent sequence $k_{n}\to k$ 
\[
0=\frac{1}{\sigma_{1}\sigma_{2}}k^{2}
\]
hence $k=0$. Thus in the limit $L\to0$ we have $x\to0$ and $k\to0$.
Substituting $x=kL$ in (\ref{eq:crit-3-eq-1}) gives
\begin{equation}
0=\frac{1}{\sigma_{1}\sigma_{2}}-\frac{1}{3}\left(\frac{1}{\sigma_{1}}+\frac{1}{\sigma_{2}}\right)L+\frac{1}{12}\left(1-2\frac{k^{2}}{\sigma_{1}\sigma_{2}}\right)L^{2}+\mathcal{O}(L^{3}).\label{eq:crit-3-eq-1-1}
\end{equation}
On taking the limit $L\to0$ we obtain
\[
0=\frac{1}{\sigma_{1}\sigma_{2}}
\]
which is absurd. Hence, possibly for a lower value for $L_{0}$, the
equation $D(kL)=0$ has no roots in $]0,\kappa]$ for all $L<L_{0}$. 
\end{proof}

\end{document}